\theoremstyle{plain}
\newtheorem*{theorem*}{Theorem}
\newtheorem{theorem}{Theorem}[section]
\crefname{theorem}{Theorem}{Theorems}
\Crefname{theorem}{Theorem}{Theorems}
\newtheorem*{lemma*}{Lemma}
\newtheorem{lemma}[theorem]{Lemma}
\crefname{lemma}{Lemma}{Lemmas}
\Crefname{lemma}{Lemma}{Lemmas}
\newtheorem*{claim*}{Claim}
\newtheorem{claim}[theorem]{Claim}
\crefname{claim}{Claim}{Claims}
\Crefname{claim}{Claim}{Claims}
\crefname{proposition}{Proposition}{Propositions}
\Crefname{proposition}{Proposition}{Propositions}
\newtheorem{corollary}[theorem]{Corollary}
\crefname{corollary}{Corollary}{Corollaries}
\Crefname{corollary}{Corollary}{Corollaries}
\crefname{conjecture}{Conjecture}{Conjectures}
\Crefname{conjecture}{Conjecture}{Conjectures}
\newtheorem{question}[theorem]{Question}
\crefname{question}{Question}{Questions}
\Crefname{question}{Question}{Questions}
\newtheorem{observation}[theorem]{Observation}
\crefname{observation}{Observation}{Observations}
\Crefname{observation}{Observation}{Observations}
\crefname{example}{Example}{Examples}
\Crefname{example}{Example}{Examples}
\theoremstyle{definition}
\crefname{problem}{Problem}{Problems}
\Crefname{problem}{Problem}{Problems}
\newtheorem{definition}[theorem]{Definition}
\crefname{definition}{Definition}{Definitions}
\Crefname{definition}{Definition}{Definitions}
\theoremstyle{remark}
\crefname{remark}{Remark}{Remarks}
\Crefname{remark}{Remark}{Remarks}
\xpatchcmd{\proof}{\itshape}{\normalfont\proofnamefont}{}{}
\newcommand{\proofnamefont}{}
\renewcommand{\proofnamefont}{\bfseries}
\newcommand{\remove}[1]{}
\newcommand{\R}{\mathbb{R}}
\def\abhi#1{}
\def\tao#1{}
\def\liana#1{}
\def\shoham#1{}
\let\abhi=\abhiOpt % COMMENT OUT for clean output
\let\tao=\taoOpt % COMMENT OUT for clean output
\let\liana=\lianaOpt % COMMENT OUT for clean 
\let\shoham=\shohamOpt % COMMENT OUT for clean
\newcommand{\eps}{\varepsilon}
\newcommand{\lam}{\lambda}
\DeclareMathOperator{\ex}{ex}
\DeclareMathOperator{\bad}{bad}
\newcommand{\homstar}{\hom^*}
\DeclareMathOperator{\Hom}{Hom}
\newcommand{\Homstar}{\Hom^*}
\newcommand{\homwstar}{\homw^{*}}
\newcommand{\exstar}{\ex^*}
\newcommand{\C}{\mathcal{C}}
\newcommand{\rhomin}{\rho_{\min}}
\newcommand{\rhomax}{\rho_{\max}}
\newcommand{\one}{\mathbbm{1}}
\newcommand{\HH}{\mathcal{H}}
		\def \homw {\hom_{\omega}}
		\def \homws {\hom_{\omega}^*}
		\def \Deltaw {\Delta_{\omega}}
		\def \maxw {\omega_{\max}}
		\def \minw {\omega_{\min}}
\renewcommand{\Pr}{\mathbb{P}}
\title{Rainbow subdivisions of cliques}
\author{
        Tao Jiang 
        \thanks{Department of Mathematics, Miami University, Oxford, OH 45056, USA. Email: \texttt{jiangt}@\texttt{miamioh.edu}.
        Research supported by National Science Foundation grant DMS-1855542.}
    \and
	    Shoham Letzter\thanks{
		Department of Mathematics, 
		University College London, 
		Gower Street, London WC1E~6BT, UK. 
		Email: \texttt{s.letzter}@\texttt{ucl.ac.uk}. 
		Research supported by the Royal Society.
    }
    \and
        Abhishek Methuku%
        \thanks{School of Mathematics,
        University of Birmingham,
        Birmingham, B15 2TT, UK. Email: \texttt{abhishekmethuku}@\texttt{gmail.com}.  Research supported by the EPSRC, grant no. EP/S00100X/1 (A. Methuku).}
    \and
     	Liana Yepremyan\thanks{ 
    	Deparment of Mathematics, Emory University, Atlanta, US.
    	Email: \texttt{lyeprem}@\texttt{emory.edu}. Research supported by Marie Sklodowska Curie Global Fellowship, H2020-MSCA-IF-2018:846304.
	}
}
\begin{document}

\date{}
\maketitle

\begin{abstract}

	\setlength{\parskip}{\medskipamount}
    \setlength{\parindent}{0pt}
    \noindent
    We show that for every integer $m \ge 2$ and large $n$, every properly edge-coloured graph on $n$ vertices with at least $n (\log n)^{53}$ edges contains a rainbow subdivision of $K_m$. This is sharp up to a polylogarithmic factor. Our proof method exploits the connection between the mixing time of random walks and expansion in graphs.
\end{abstract}

\section{Introduction} \label{sec:intro}

    The \emph{Tur\'an number} of a graph $H$, denoted by $\ex(n, H)$, is the maximum possible number of edges in an $n$-vertex graph that does not contain a copy of $H$. In this paper we study a rainbow variant of Tur\'an numbers, introduced by Keevash, Mubayi, Sudakov and Verstra\"ete \cite{keevash2007rainbow}. A \emph{proper edge-colouring} of a graph is an assignment of colours to its edges so that edges that share a vertex have distinct colours. A \emph{rainbow} subgraph of an edge-coloured graph is a subgraph whose edges have distinct colours.
    The \emph{rainbow Tur\'an number} of a graph $H$, denoted by $\exstar(n, H)$, is the maximum possible number of edges in a properly edge-coloured graph on $n$ vertices with no rainbow copy of $H$. One can define $\ex(n, \HH)$ and $\exstar(n, \HH)$ analogously for a family of graphs $\HH$.
    
    It was shown in \cite{keevash2007rainbow} that
    $\exstar(n, H) = (1 + o(1))\ex(n, H)$ for non-bipartite $H$. Perhaps unsurprisingly, little is known about rainbow Tur\'an numbers of bipartite graphs. The authors of \cite{keevash2007rainbow} raised two problems concerning rainbow Tur\'an numbers of even cycles, one concerning an even cycle of fixed length $2k$ and the other concerning the family $\C$ of all cycles. 
    For all $k\geq 2$,  they showed that $\exstar(n, C_{2k}) = \Omega(n^{1 + 1/k})$ and conjectured that
    $\exstar(n, C_{2k}) = \Theta(n^{1 + 1/k})$. The authors of \cite{keevash2007rainbow} verified the conjecture
    for $k \in \{2, 3\}$. Following further progress on the conjecture by Das, Lee and Sudakov \cite{das2013rainbow},  Janzer \cite{janzer2020rainbow} recently resolved the conjecture. 
    
    Regarding the rainbow Tur\'an number of the family $\C$ of all cycles,
    Keevash, Mubayi, Sudakov and Verstra\"ete \cite{keevash2007rainbow} showed that $\exstar(n, \C) = \Omega(n \log n)$, by considering a naturally defined proper edge-colouring of the hypercube $Q_k$, where $k=\lfloor \log n \rfloor$ (colour the edge $uv$ by colour $i$ if the $u$ and $v$ differ in the coordinate $i$; it is easy to see that no cycle can be rainbow in this colouring). They also showed that $\exstar(n, \C) = O(n^{4/3})$ and
    asked if $\exstar(n, \C) = O(n^{1+o(1)})$ and furthermore, if $\exstar(n, \C) = O(n \log n)$.
    Das, Lee and Sudakov \cite{das2013rainbow} answered the first question affirmatively, by showing that $\exstar(n, \C)  \le ne^{(\log n)^{\frac{1}{2}+o(1)}}$. In 2022, Janzer \cite{janzer2020rainbow} improved this bound by establishing that $\exstar(n, \C) = O\big(n (\log n)^4\big)$, which is tight up to a polylogarithmic factor. Recently, Jiang, Methuku and Yepremyan \cite{jiang2021rainbow} proved the following generalisation of Das, Lee and Sudakov~\cite{das2013rainbow} on $\exstar(n,\C)$.
    %Unlike  \cite{das2013rainbow}, 

    \begin{theorem}[Jiang, Methuku, Yepremyan~\cite{jiang2021rainbow}]
    \label{thm:JMY}
        For every integer $m \geq 2$ there exists a constant $c>0$ such that for every integer $n\geq m$ the following holds. If $G$ is a properly edge-coloured graph  on $n$ vertices with at least $ne^{c\sqrt{\log n}}$ edges, then $G$ contains a rainbow subdivision of $K_m$, where each edge is subdivided at most $1300 \log^2 n$ times.
    \end{theorem}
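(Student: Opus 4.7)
The plan is to reduce to a subgraph with large minimum degree, establish a rainbow short-path lemma, and then iteratively build the subdivision of $K_m$ by connecting branch vertices with short rainbow paths that avoid previously used colours and internal vertices. The first reduction is routine: by iteratively deleting vertices of degree below $d/4$, where $d = e^{c\sqrt{\log n}}$ is (roughly) the average degree of $G$, I obtain a subgraph $G' \subseteq G$ with minimum degree at least $d/4$; the induced edge-colouring remains proper.

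The heart of the proof is a \emph{rainbow expansion lemma}: for every vertex $v \in V(G')$, every set $C$ of at most $K := 1300 \binom{m}{2} \log^2 n$ forbidden colours, and every set $W \subseteq V(G') \setminus \{v\}$ of at most $K$ forbidden vertices, at least $|V(G')|/2$ vertices are reachable from $v$ by a rainbow path of length at most $650 \log^2 n$ that uses only edges of colour outside $C$ and visits only vertices outside $W$. To prove this I would run a layered BFS from $v$, keeping, for each reached $u$, a shortest rainbow path with colour set $X_u$. The properness of the colouring ensures that $C \cup X_u$ forbids at most $|C| + |X_u|$ edges at $u$, so $u$ retains at least $d/4 - |C| - |X_u| = \Omega(d)$ usable neighbours for extension (since $|X_u| \le 650 \log^2 n$ throughout). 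A ball-growth or random-walk mixing argument should then convert this per-vertex degree into the desired reach.

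With the lemma in hand, the subdivision is built greedily. Fix arbitrary branch vertices $v_1, \ldots, v_m \in V(G')$ and process the $\binom{m}{2}$ pairs in some order. When handling pair $(v_i, v_j)$, take $C$ to be the set of colours already used and $W$ to be the internal vertices of previously built paths together with the other branch vertices. Apply the expansion lemma to $v_i$ with $(C, W)$ and to $v_j$ with $(C \cup C_i, W)$, where $C_i$ is the set of colours appearing along the $v_i$-BFS. The two reachable sets each have size $\Omega(n)$, so they intersect at some vertex $w$, yielding a short rainbow $v_i$--$v_j$ path $P_{ij}$ through $w$ of length at most $1300 \log^2 n$ whose interior and colour set are disjoint from the previous structure. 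Throughout the process $|C|, |W| \le K$, so the lemma remains applicable.

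The main obstacle is the rainbow expansion lemma. A naive BFS in a graph of minimum degree $d = e^{c\sqrt{\log n}}$ already reaches $n$ vertices in $(1/c)\sqrt{\log n}$ steps, so the $\log^2 n$ budget leaves ample slack for length; the genuine difficulty is that expansion can be bottlenecked at small ``bad'' sets, and the rainbow constraint together with the forbidden sets $C, W$ amplifies such bottlenecks. A robust way to handle this, foreshadowed by the present paper's abstract, is to first pass to a good expander inside $G'$ and then argue that a properly colour-restricted random walk from $v$ mixes in $O(\log^2 n)$ steps, so that many vertices remain reachable after the forbidden sets are removed. The subtle point is to turn rapid mixing of such a walk into the existence of short \emph{rainbow} paths, which requires either analysing a walk whose transitions depend on the colour history or decoupling the colour constraint from the walk via a concentration argument — this is where most of the technical work should concentrate.
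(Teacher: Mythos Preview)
This theorem is \emph{cited} from \cite{jiang2021rainbow}; the present paper does not prove it. The only information the paper gives about its proof is a one-line description: ``The method used in \cite{jiang2021rainbow} utilises robust expanders in the coloured setting together with a density increment argument.'' So there is no paper proof to compare your attempt against in any detail, and your outline is at best a conjectural reconstruction.

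That said, your proposal has a genuine gap at the connecting step. You write: ``Apply the expansion lemma to $v_i$ with $(C, W)$ and to $v_j$ with $(C \cup C_i, W)$, where $C_i$ is the set of colours appearing along the $v_i$-BFS.'' But the $v_i$-BFS reaches $\Omega(n)$ vertices, each via a path of length up to $650\log^2 n$, so $|C_i|$ can be of order $n\log^2 n$ --- far larger than the $K = O(m^2\log^2 n)$ that your expansion lemma tolerates. Thus the second application of the lemma is illegitimate. If instead $C_i$ is meant to be the colour set of a \emph{single} $v_i$-path, you cannot choose that path before knowing the meeting vertex $w$, and for a fixed $w$ there is no reason the $v_i$--$w$ and $v_j$--$w$ paths are colour-disjoint. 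This is the standard obstruction in rainbow connectivity arguments, and a bare ``the reachable sets intersect'' does not overcome it.

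The present paper's \Cref{lem:reachable-robust-new} and the proof of \Cref{thm:main-rooted} show one way to repair this: strengthen the expansion lemma so that no colour is used on more than $n/q$ of the BFS paths, and then count to find a pair $(u,v)$ in the two reachable sets whose paths $P(u), Q(v)$ are colour-disjoint. The original paper \cite{jiang2021rainbow} apparently handles it via a density increment instead. Either way, some additional mechanism is required; your sketch as written does not supply one.
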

    The method used in \cite{jiang2021rainbow} utilises robust expanders in the coloured setting together with a density increment argument, inspired in part by the method introduced by Sudakov and Tomon~\cite{sudakov2020extremal}.
    
    In this paper, we lower the $e^{O(\sqrt{\log n})}$ error term in Theorem~\ref{thm:JMY} to a polylogarithmic term, which in conjunction with the above-mentioned $\Omega(n\log n)$ lower bound on $\exstar(n,\C)$ determines the rainbow Tur\'an number of the family of $K_m$-subdivisions up to a polylogarithmic factor.

    \begin{theorem}\label{thm:main}
        Fix an integer $m \geq 2$ and let $n$ be sufficiently large.
        Suppose that $G$ is a properly edge-coloured graph on $n$ vertices with at least $n (\log n)^{53}$ edges. Then $G$ contains a rainbow subdivision of $K_m$, where each edge is subdivided at most $(\log n)^{6}$ times.
    \end{theorem}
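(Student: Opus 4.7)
The plan is to follow the expander / random-walk framework used in \cite{sudakov2020extremal} and \cite{jiang2021rainbow}, but to replace their density-increment step (which is the source of the $e^{O(\sqrt{\log n})}$ loss in \Cref{thm:JMY}) with a direct mixing-time estimate for random walks in the cleaned subgraph. The gain comes from the fact that the $n(\log n)^{60}$-edge assumption provides a polylogarithmic spectral gap, so a walk of length only polylog$(n)$ already mixes.

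First I would pass to a subgraph $G'\subseteq G$ of average degree $d=\Theta((\log n)^{60})$ that is almost regular (min and max degrees both $\Theta(d)$) and induced by the colouring in the sense that most colour classes have roughly $|V(G')|$ edges. This is standard dyadic pruning and loses only a constant factor in density. In such a $G'$ the associated random walk mixes in $L=\Theta(\log n \cdot \log\log n)$ steps up to an inverse polylog error, by Cheeger-type bounds or by the Sudakov--Tomon robust-expansion lemma.

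The technical core is a \emph{rainbow connection lemma}: for any vertex $u\in V(G')$, any set $F$ of ``forbidden'' colours with $|F|\le d/2$, and any set $W\subseteq V(G')$ of ``forbidden'' vertices with $|W|\le (\log n)^{10}$, there are at least $|V(G')|/2$ vertices $v$ from which $G'-W$ contains a rainbow $u$--$v$ path of length at most $(\log n)^{6}$ using no colour from $F$. I would prove this by running a non-backtracking random walk of length $(\log n)^{6}$ from $u$ while maintaining the rainbow and $F$-avoidance conditions. At each step, properness of the edge-colouring means that the growing ``used plus forbidden'' colour set eliminates at most one edge per offending colour at the current vertex; since the running total of bad colours is $O((\log n)^6+|F|)\ll d$, the walk survives each step with probability $1-o(1)$. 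A second-moment / collision count for the endpoint distribution (in the vein of the $\ell_2$ mixing argument) then shows that for typical targets $v$ the walk actually reaches $v$, giving many rainbow $F$-avoiding paths.

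Given the lemma, the subdivision is built greedily. Choose $m$ branch vertices $v_1,\dots,v_m$ (any $m$ typical high-degree vertices of $G'$ will do). Process the $\binom{m}{2}$ pairs in some order: at the pair $(v_i,v_j)$, take $F$ to be the set of colours used on previously built paths and $W$ the set of previously used internal vertices. Both sets have size at most $m^2(\log n)^6=O((\log n)^6)$, which is well within the lemma's tolerance. Applying the lemma twice (to $v_i$ and to $v_j$) and matching in the middle—or applying a two-sided version directly—yields a rainbow $v_i$--$v_j$ path of length at most $(\log n)^6$ that is vertex-internally disjoint from $W$ and colour-disjoint from $F$. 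After $\binom{m}{2}$ iterations we obtain a rainbow subdivision of $K_m$.

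The main obstacle will be the rainbow connection lemma, specifically controlling the endpoint distribution of the walk \emph{conditional on} the rainbow/$F$-avoidance event, which is time-dependent and not a function of the spatial position alone. I expect the right way around this is a coupling / sprinkling argument: couple the rainbow walk with an unconstrained random walk and show that on a $1-o(1)$ fraction of trajectories the constraint never becomes active, so the mixing estimate for the unconstrained walk transfers. Everything else (the cleaning step, the greedy assembly, the lossless polylog bounds) is relatively routine once the lemma is in hand.
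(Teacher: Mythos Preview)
Your high-level plan—pass to an almost-regular expander and exploit random-walk mixing—is the paper's framework, but your execution of the core step is different from the paper's and has a real gap.

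The paper does not prove a rainbow connection lemma by coupling a constrained walk to an unconstrained one. Instead it imports two counting lemmas of Janzer (\Cref{lem:janzer-edges,lem:janzer-vertices}), which bound the total number of \emph{degenerate} closed $2k$-walks (those repeating a colour or a vertex) by roughly $k^{3/2}\Delta^{1/2}n^{1/2k}\hom(C_{2k})^{1-1/2k}$. Your mixing idea enters only as \Cref{lem:rhomin-rhomax}: in the expander, $\hom_{x,y}(C_{2k})$ varies by at most a polylog factor over pairs $(x,y)$, so the degenerate walks spread almost uniformly across pairs and all but $o(n^2)$ of them are ``good'' in the sense $\homstar_{x,y}(C_{2k})\le s^{-2}\hom_{x,y}(C_{2k})$ (\Cref{lem:few-bad-pairs}). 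Tur\'an's theorem then produces $m$ mutually good branch vertices, and a short sampling argument (\Cref{lem:theta}) converts each good pair into $s$ pairwise colour- and internally vertex-disjoint rainbow $k$-paths, from which the subdivision is assembled greedily. Crucially, the branch vertices are \emph{chosen after} the good-pair analysis, not fixed in advance; no analogue of your connection lemma is needed.

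Your coupling observation—that an unconstrained walk of length $(\log n)^6$ in a properly coloured graph of degree $(\log n)^{60}$ is rainbow with probability $1-o(1)$—is sound and morally related to Janzer's bound. But it does not give what ``match in the middle'' needs: from ``$\ge n/2$ vertices reachable from $v_i$ by rainbow $F$-avoiding paths'' and the same from $v_j$, you cannot conclude that some common midpoint $w$ carries paths $P_w,Q_w$ that are mutually colour- and internally-vertex-disjoint, and your sketch offers no mechanism for this. The paper hits exactly this obstacle in the rooted version (\Cref{thm:main-rooted}) and resolves it by combining reachability (\Cref{lem:reachable-robust-new}) with the Janzer-based good-pair count: one finds $u$ near $v_i$ and $v$ near $v_j$ with $(u,v)$ good, and then \Cref{lem:theta} supplies many disjoint middle paths, one of which avoids everything already used. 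Without Janzer's lemmas or an equivalent device, the middle step in your plan is a genuine gap. (Minor: the mixing time in the extracted expander is $\Theta((\log n)^5)$, not $\Theta(\log n\log\log n)$, since the expansion one obtains is only $\eta=\Theta((\log n)^{-2})$; this is harmless because you allow paths of length $(\log n)^6$.)
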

    
     Theorem~\ref{thm:main} provides the rainbow analogue of a fundamental (and highly influential) result of Mader~\cite{mader1967homomorphieeigenschaften} stating that for every integer $m \ge 2$, there exists $d = d(m)$ such that every graph with average degree at least $d$ contains a subdivision of $K_m$. Research on this problem has a long history, see e.g., Mader~\cite{mader1972hinreichende}, Koml\'os and Szemer\'edi~\cite{komlos1994topological,komlos1996topological}, and Bollob\'as and Thomason~\cite{bollobas1998proof}.

    Our proof of Theorem~\ref{thm:main} exploits the connection between mixing time of random walks and edge expansion.
    This connection is used in conjunction with counting lemmas developed by Janzer in \cite{janzer2020rainbow} regarding homomorphisms of cycles in graphs.
    We also prove a strengthening of \Cref{thm:main}, regarding `rooted' rainbow subdivisions of $K_m$ in expanders (see \Cref{thm:main-rooted}). For this stronger version, in addition to the ingredients used for proving Theorem~\ref{thm:main}, we use the framework of \cite{jiang2021rainbow} and an additional idea used by Letzter in  \cite{letzter2021tight} 
    (see Lemma~\ref{lem:reachable-robust-new}).

    The rest of the paper is organised as follows. In  \Cref{sec:overview},
    we give a short overview of our proofs, namely the proof of \Cref{thm:main} and a strengthening of it (\Cref{thm:main-rooted}).
    In \Cref{sec:prelims}, we mention various preliminary results, regarding the existence of expanders which are close to being regular and properties of expanders. In \Cref{sec:non-rainbow-walks}, we state three lemmas due to Janzer \cite{janzer2020rainbow} and some consequences of these lemmas. \Cref{sec:random-walks} contains the main new ideas of the paper, exploiting a connection between the mixing time of a random walk and expansion properties in a graph. In \Cref{sec:main-proof},
    we prove \Cref{thm:main} and a strengthening of it regarding rooted subdivisions in almost regular expanders. We complete the paper with concluding remarks in \Cref{sec:conc}.
    
    Throughout the paper, for convenience, we drop floor and ceiling signs for large numbers, and logarithms are in base $2$.

\section{Overview of the proofs} \label{sec:overview}
    Our main idea is to use the connection between the mixing time of random walks, the notion of `conductance' (see \Cref{def:conductance}) and our notion of expansion.     
    It is a well-known and very useful fact that `large' conductance implies `small' mixing time (see, e.g., Lov\'asz \cite{lovasz1993random}). Moreover, our notion of expansion implies that our expanders have large conductance. Using these facts we show that if additionally, such expanders are almost regular, then long enough walks are close to being uniformly distributed.     
    We also use two counting lemmas of Janzer from~\cite{janzer2020rainbow}. Below we describe these lemmas and the main ideas in more detail.
    
    In a properly edge-coloured graph, say that a closed walk is \emph{degenerate} if it is either not rainbow or visits a vertex more than once. 
    The first lemma from \cite{janzer2020rainbow} implies that in a properly edge-coloured graph which is close to being regular, the number of degenerate closed $2k$-walks is significantly smaller than the number of closed $2k$-walks, provided that $k$ is sufficiently large.
    
    Given two vertices $x$ and $y$, a closed $2k$-walk $W$ is said to \emph{be hosted} by $x$ and $y$ if it starts at $x$ and reaches $y$ after $k$ steps. We call a pair of vertices $(x, y)$ \emph{good} if the number of degenerate closed $2k$-walks hosted by $x$ and $y$ is significantly smaller than the number of closed $2k$-walks hosted by $x$ and $y$. The second lemma from \cite{janzer2020rainbow} that we use shows that if a pair $(x, y)$ is good then there are many short pairwise colour-disjoint and internally vertex-disjoint $k$-paths from $x$ to $y$.

	In fact, we use versions of these lemmas which are applicable to edge-weighted graphs. These weighted versions can be easily deduced for the original unweighted versions.\footnote{To deduce the weighted version of the first lemma, we actually need a multigraph version of Janzer's original one, whose proof is identical to the original version for simple graphs.} We will later apply these weighted lemmas with a specific edge-weighting, namely where $w(xy) = \frac{1}{\sqrt{d(x)d(y)}}$. This weight was chosen so that the weight of a walk $W = x_0 \ldots x_k$ is the probability that a random walk of length $k$ starting at $x_0$ produces $W$, times $\sqrt{\frac{d(x_k)}{d(x_0)}}$.

    Using results about random walks on graphs, which relate mixing time to expansion,  we show that in an expander $G$ on $n$ vertices which is close to being regular, for $k$ suitably large (at least polylogarithmic in $n$), the numbers of closed $2k$-walks hosted by any two pairs of vertices are within a suitable polylogarithmic factor (in $n$) of each other. This, combined with the fact that the number of degenerate closed $2k$-walks is small compared to the total number of closed $2k$-walks (due to the first lemma above), implies that almost all pairs of vertices are good. Thus, using Tur\'an's theorem, we find a copy of $K_m$ in the graph formed by good pairs.
    This, together with the fact that there are many short colour-disjoint and internally vertex-disjoint rainbow paths between any good pair of vertices (due to the second lemma above) allows us to greedily build the desired rainbow-subdivision of $K_m$.

    We also prove a stronger version of Theorem~\ref{thm:main},  \Cref{thm:main-rooted}, asserting that in an expander $G$ which is close to being regular and whose average degree is large enough, for any set $S$ of $m$ vertices, there exists a rainbow $K_m$-subdivision with the vertices of $S$ being the branching vertices. The main step in this proof shows that for any two vertices $x$ and $y$ in $G$ there is a short rainbow $x,y$-path avoiding a prescribed small set $C$ of vertices and colours. By iterating this over all pairs of vertices in $S$, we can build the desired rainbow $K_m$-subdivision.
    
    To show that there is a short rainbow $x,y$-path in $G$, we first apply tools due to Jiang, Methuku and Yepremyan \cite{jiang2021rainbow} and Letzter \cite{letzter2021tight} to show that there is a set of vertices $U$ of size $\Omega(n)$ such that for each $v\in U$ there is such a short rainbow $x,v$-path $P(v)$ and a short rainbow $y,v$-path $Q(v)$, both of which avoid $C$, such that no colour is used on too many of these paths $P(v)$ and $Q(v)$. It easily follows that for almost all pairs $(u, v)$ with $u, v\in U$, the paths $P(u)$ and $Q(v)$ are colour-disjoint.
    This, combined with the fact that most pairs in $U$ are good (in the sense mentioned earlier), implies that there exists at least one good pair $(u,v)$ for which $P(u)$ and $Q(v)$ are colour-disjoint. This allows us to find a suitable short rainbow $u,v$-path $L$ such that $P(u)\cup L\cup Q(v)$ is a rainbow $x,y$-walk which contains the desired rainbow $x,y$-path.
    
    \section{Preliminaries} \label{sec:prelims}

    Let $G$ be a graph. We denote by $d(G)$ the average degree of $G$. For a subset $S \subseteq V(G)$, let $e(S) = e(G[S])$, and for subsets $S, T \subseteq V(G)$, let $e(S, T) = e(G[S, T])$.  We will use the notions of $d$-minimality and expanders, defined below, following \cite{jiang2021rainbow}.
     
    \begin{definition}
        A graph $G$ is said to be $d$-\emph{minimal} if $d(G) \geq d$ but $d(H) < d$ for every proper subgraph $H\subseteq G$.
    \end{definition}

    It is easy to see that every graph $G$ contains a $d(G)$-minimal subgraph.
    The following observation was used in \cite{jiang2021rainbow}. For completeness, we include its short proof.
        
    \begin{observation} \label{obs:d-expand}
        If $G$ is $d$-minimal, then every subset $S\subseteq V(G)$ satisfies $e(S)+e(S,S^c)\geq \frac{d |S|}{2}$.
        In particular, $\delta(G) \ge \frac{d}{2}$.
    \end{observation}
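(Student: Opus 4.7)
The plan is to exploit $d$-minimality by deleting $S$ from $G$ and comparing edge counts in $G$ versus $G-S$. First I would note that $d$-minimality gives $d(G) \ge d$, hence $e(G) \ge d|V(G)|/2$. Then, assuming $\emptyset \neq S \subsetneq V(G)$, I would apply minimality to the proper induced subgraph $H = G[V(G) \setminus S]$, obtaining $d(H) < d$ and therefore $2e(H) < d(|V(G)| - |S|)$.

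The key identity is the partition of edges $e(G) = e(S) + e(S, S^c) + e(H)$. Rearranging and combining the two bounds above yields
\[
e(S) + e(S, S^c) \;=\; e(G) - e(H) \;>\; \frac{d|V(G)|}{2} - \frac{d(|V(G)| - |S|)}{2} \;=\; \frac{d|S|}{2},
\]
which is in fact strictly stronger than required. The boundary cases are immediate: $S = \emptyset$ gives $0 \ge 0$, and $S = V(G)$ gives $e(S) + e(S, S^c) = e(G) \ge d|V(G)|/2 = d|S|/2$ directly from the first bound.

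For the minimum degree assertion, I would simply specialise the displayed inequality to $S = \{v\}$ for an arbitrary $v \in V(G)$: then $e(S) + e(S, S^c) = \deg(v)$, so $\deg(v) \ge d/2$. There is no real obstacle here, since the whole argument is a one-line calculation given the definition of $d$-minimality. The only subtlety worth flagging is the corner case $S = V(G)$, where $G-S$ is empty and one cannot directly invoke the minimality hypothesis on it, which is why I would handle it separately using only the global bound $e(G) \ge d|V(G)|/2$.
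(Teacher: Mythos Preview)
Your proof is correct and follows essentially the same approach as the paper: both use $e(G)\ge d|V(G)|/2$ together with the minimality bound $e(S^c)<d|S^c|/2$ on the induced subgraph on $S^c$, and then compare via the edge partition $e(G)=e(S)+e(S,S^c)+e(S^c)$. The paper phrases this as a one-line contradiction while you write it directly and handle the boundary cases $S=\emptyset$ and $S=V(G)$ explicitly, but the substance is identical.
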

    
    \begin{proof} 
        Suppose otherwise. Then $e(S^c) \geq \frac{d|V|}{2} - \frac{d|S|}{2} \geq \frac{d(|V| - |S|)}{2}$, contradicting $d$-minimality.
    \end{proof}
    
    \begin{definition} \label{def:expander-new}
        Given $d \ge 1$, $\eta \in (0, 1)$ and $\varepsilon\in (0,\frac{1}{2}]$, an $n$-vertex graph $G$ is called a \emph{$(d, \eta, \eps)$-expander} if $G$ is $d$-minimal, and for every subset $S \subseteq V(G)$ of size at most $(1 - \eps)n$, we have $d(S) \le (1 - \eta)d$. 
    \end{definition}

    Note that, by definition, for $0<\varepsilon'\leq\varepsilon\leq\frac{1}{2}$ and $0 < \eta \le \eta' < 1$, every $(d,\eta',\varepsilon')$-expander is  also a $(d,\eta,\varepsilon)$-expander. Also, if $G$ is a $(d, \eta, \eps)$-expander then it is a $(d(G), \eta, \eps)$-expander.
    It will be useful to note the following `edge-expansion' property of $(d, \eta, \eps)$-expanders.
    
    \begin{observation} \label{obs:edge-expansion-new}
        Let $n, d \ge 1$, let $\eta\in (0, 1)$ and let $\varepsilon\in (0,\frac{1}{2}]$.
        Suppose that $G$ is a $(d, \eta, \eps)$-expander on $n$ vertices. Then every $S \subseteq V(G)$ with $|S| \le (1 - \eps)n$ satisfies $e(S, S^c) \ge \frac{\eta d}{2}|S|$.
    \end{observation}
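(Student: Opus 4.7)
The plan is to combine the two pieces of information the hypotheses give us about the set $S$: a lower bound on the total number of edges incident to $S$ coming from $d$-minimality, and an upper bound on the number of edges inside $S$ coming from the expansion condition. Subtracting the latter from the former yields the claimed bound on the edge boundary $e(S, S^c)$.

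More concretely, first I would invoke \Cref{obs:d-expand}, which applies since any $(d, \eta, \varepsilon)$-expander is $d$-minimal by definition, to obtain
\[
    e(S) + e(S, S^c) \ge \frac{d|S|}{2}.
\]
Next, since $|S| \le (1-\varepsilon)n$, the expander condition gives $d(S) \le (1-\eta)d$, which rewrites as
\[
    e(S) = \frac{d(S)\,|S|}{2} \le \frac{(1-\eta)d\,|S|}{2}.
\]
Subtracting, we get
\[
    e(S, S^c) \ge \frac{d|S|}{2} - \frac{(1-\eta)d|S|}{2} = \frac{\eta d}{2}|S|,
\]
as desired.

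There is no real obstacle here: the observation is essentially a one-line bookkeeping consequence of the definitions, and the only mild subtlety is remembering that $d(S)$ is defined as $2e(S)/|S|$ (so that the $d$-minimality lower bound on edges incident to $S$ and the expansion upper bound on edges inside $S$ are directly comparable). The statement can be seen as the ``edge-expansion'' counterpart of the more abstract vertex-density expansion packaged into \Cref{def:expander-new}.
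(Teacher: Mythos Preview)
Your proof is correct and essentially identical to the paper's own argument: both invoke \Cref{obs:d-expand} for the lower bound $e(S)+e(S,S^c)\ge \frac{d|S|}{2}$, use the expander condition $d(S)\le(1-\eta)d$ to bound $e(S)\le\frac{(1-\eta)d|S|}{2}$, and subtract.
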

    
    \begin{proof}
        Let $S \subseteq V(G)$ satisfy $|S| \le (1 - \eps)n$.
        Since $G$ is $d$-minimal and by \Cref{obs:d-expand}, we have $e(S) + e(S, S^c) \ge \frac{d|S|}{2}$. Since $G$ is a $(d, \eta, \eps)$-expander, by definition, we also have $e(S) = \frac{d(S)}{2}|S| \le \frac{(1 - \eta)d}{2}|S|$. It follows that $e(S, S^c) \ge \frac{\eta d}{2}|S|$, as claimed.
    \end{proof}
    
    \begin{lemma}[Lemma 2.5 from \cite{jiang2021rainbow}]
    \label{lem:existence-expanders-new}
        Let $n, d \ge 1$, let $\eps \in (0,\frac{1}{2}]$ and let $\eta = \frac{\eps}{2\log n}$.
        Suppose that $G$ is a graph on $n$ vertices with average degree $d$. Then $G$ contains a $(d', \eta, \eps)$-expander, with $d' \ge \frac{d}{2}$. 
    \end{lemma}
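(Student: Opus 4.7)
The plan is a straightforward iterative ``cleaning'' argument. First, replace $G$ by a $d$-minimal subgraph $G_0$; since $d(G)\ge d$, such a subgraph exists and satisfies $d(G_0)\ge d$. Now define inductively a sequence of graphs $G_0, G_1, G_2, \ldots$ as follows. At step $i$, if $G_i$ is already a $(d(G_i), \eta, \eps)$-expander, terminate and output $G_i$. Otherwise, by the failure of the defining expansion condition (combined with the fact that each $G_i$ is $d(G_i)$-minimal by construction), there exists $S_i \subseteq V(G_i)$ with $|S_i| \le (1-\eps)|V(G_i)|$ and $d(G_i[S_i]) > (1-\eta)\,d(G_i)$. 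Let $G_{i+1}$ be a $d(G_i[S_i])$-minimal subgraph of $G_i[S_i]$; then $G_{i+1}$ is $d(G_{i+1})$-minimal with $d(G_{i+1}) \ge d(G_i[S_i]) > (1-\eta)\,d(G_i)$.

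A simple induction on $i$ yields two invariants: $|V(G_i)| \le (1-\eps)^i\, n$ and $d(G_i) \ge (1-\eta)^i\, d$. The first invariant forces termination within $k \le \log n / \eps$ iterations, since each $G_i$ must be non-empty and $-\log(1-\eps)\ge \eps$ for $\eps\in(0,1/2]$.

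It then remains to verify $d(G_k) \ge d/2$ at termination. By Bernoulli's inequality, $(1-\eta)^k \ge 1 - \eta k$, and with the prescribed $\eta = \eps/(2\log n)$ we get $\eta k \le (\eps/(2\log n))\cdot(\log n/\eps) = 1/2$. Hence $d(G_k) \ge (1-\eta)^k\, d \ge d/2$, and $G_k$ is the desired $(d(G_k), \eta, \eps)$-expander.

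There is no serious obstacle; the only subtle point — which dictates the choice $\eta = \eps/(2\log n)$ in the statement — is to balance the iteration budget $\log n/\eps$ coming from the vertex-shrinkage factor $(1-\eps)$ against the per-step multiplicative loss $(1-\eta)$ in average degree. Bernoulli's inequality, rather than the weaker bound $(1-\eta)^k\ge e^{-2\eta k}$, is what lets us close the loop with the clean constant $1/2$.
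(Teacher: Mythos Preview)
The paper itself does not prove this lemma; it is quoted from \cite{jiang2021rainbow} and used as a black box. Your argument---iteratively pass to a $d$-minimal subgraph of a dense induced subgraph on at most a $(1-\eps)$-fraction of the vertices whenever the expansion condition fails, observe that the process halts within $\log n/\eps$ rounds, and use Bernoulli's inequality together with $\eta=\eps/(2\log n)$ to bound the total loss in average degree by a factor of $1/2$---is correct and is the standard proof of this type of statement. One small point worth making explicit is that a $d'$-minimal graph $H$ is automatically $d(H)$-minimal (every proper subgraph has average degree strictly below $d'\le d(H)$), so each $G_i$ in your iteration is indeed $d(G_i)$-minimal as required by Definition~\ref{def:expander-new}.
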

    
    The following lemma from \cite{jiang2021rainbow} asserts that in a properly edge-coloured expander, one can reach almost every vertex by a short rainbow path starting at a specified vertex.
    
    \begin{lemma}[Lemma 2.7 from \cite{jiang2021rainbow}] \label{lem:reachable-new}
        Let $n, \ell, d, M \ge 1$, let $\eta \in (0,1)$, and let $\eps \in (0, \frac{1}{2}]$. Suppose that $\ell = \frac{4 \log n}{\eta}$ and $d \ge \frac{4\ell + 8M}{\eta}$.
        Let $G$ be a properly edge-coloured $(d, \eta, \eps)$-expander on $n$ vertices, let $x \in V(G)$ and let $F$ be a set of vertices and colours of size at most $M$. Then at least $(1 - \eps)n$ vertices can be reached from $x$ by a rainbow path of length at most $\ell+1$ that avoids the vertices and colours in $F$.
    \end{lemma}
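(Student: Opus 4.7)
The plan is a breadth-first exploration from $x$, building the reachable set layer by layer and maintaining a rainbow witness path for each reached vertex. Set $B_0 = \{x\}$, and for each $i \ge 0$ and $v \in B_i$, fix a rainbow path $P_v$ from $x$ to $v$ of length at most $i$ avoiding the vertices and colours of $F$. Define $B_{i+1}$ as $B_i$ together with every $u \notin F$ admitting an edge $vu$ with $v \in B_i$ whose colour lies outside $F$ and outside the colour set of $P_v$; then $P_v \cdot vu$ is a rainbow path of length at most $i+1$ avoiding $F$, certifying $u \in B_{i+1}$. The goal is to show $|B_{\ell+1}| \ge (1-\eps)n$.

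The key growth estimate runs as follows. Suppose $|B_i| \le (1-\eps)n$. By \Cref{obs:edge-expansion-new}, $e(B_i, B_i^c) \ge \frac{\eta d}{2}|B_i|$. For each $v \in B_i$, proper edge-colouring forces at most $|P_v| + 2M \le i + 2M$ edges at $v$ to be \emph{blocked} (colour appearing on $P_v$, colour in $F$, or other endpoint in $F$), since each forbidden colour or vertex accounts for at most one incident edge at $v$. Summing over $v \in B_i$ and using $i \le \ell$ together with $d \ge (4\ell + 8M)/\eta$, the number of unblocked edges from $B_i$ to $B_i^c$ is at least
\[
\left(\tfrac{\eta d}{2} - (i + 2M)\right)|B_i| \;\ge\; \tfrac{\eta d}{4}|B_i|,
\]
and each such unblocked edge $vu$ delivers a new vertex $u \in B_{i+1} \setminus B_i$.

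The main obstacle is turning this edge count into a multiplicative growth estimate $|B_{i+1}| \ge (1 + \Omega(\eta))|B_i|$, which is what $\ell = 4\log n / \eta$ BFS steps demand in order to cover at least $(1-\eps)n$ vertices. Naively dividing by the maximum degree is too lossy, since a $d$-minimal graph may contain very high-degree vertices. I would resolve this by a preliminary truncation: because $\sum_u \deg(u) = 2e(G) \le dn$ (as $G$ is $d$-minimal), all but at most $\eps n /2$ vertices have degree bounded by $Cd$ for a suitable $C = C(\eps)$. After a short averaging argument showing that most of the good edges from $B_i$ land in this low-degree portion, each new vertex absorbs at most $Cd$ good edges, yielding $|B_{i+1} \setminus B_i| \ge \frac{\eta}{4C}|B_i|$ and the required multiplicative growth. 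Iterating for $\ell$ steps then gives $|B_\ell| \ge (1-\eps)n$, completing the proof.
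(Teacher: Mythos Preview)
This lemma is quoted from \cite{jiang2021rainbow} without proof in the present paper, so there is nothing here to compare against directly; I comment on your argument on its own merits.

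Your BFS setup and the count of blocked edges are correct: whenever $|B_i|\le(1-\eps)n$ there are indeed at least $\frac{\eta d}{4}|B_i|$ unblocked edges from $B_i$ into $B_i^c$, all landing in $B_{i+1}\setminus B_i$. The problem is the last paragraph. First, the ``short averaging argument'' is not justified: nothing prevents essentially all of the good edges leaving $B_i$ from landing in the small set $H$ of high-degree vertices (when $|B_i|$ is small, $e(B_i,H)$ can easily swamp $\frac{\eta d}{4}|B_i|$). Second, even granting that claim, the numbers do not close. To force $|H|\le \eps n/2$ via Markov you must take $C\ge 2/\eps$, and then your per-step growth factor is only $1+\frac{\eta}{4C}=1+O(\eta\eps)$; after $\ell=\frac{4\log n}{\eta}$ steps this gives at most $n^{O(\eps)}$, far short of $(1-\eps)n$. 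So the truncation cannot simultaneously keep $H$ small enough and the growth fast enough.

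The clean fix is to track $e(B_i)$ rather than $|B_i|$. Every unblocked edge from $B_i$ to $B_i^c$ lies inside $G[B_{i+1}]$, so $e(B_{i+1})\ge e(B_i)+\frac{\eta d}{4}|B_i|$. Since $|B_i|\le(1-\eps)n$, the expander definition gives $e(B_i)\le\frac{d}{2}|B_i|$, hence $|B_i|\ge\frac{2}{d}\,e(B_i)$ and therefore $e(B_{i+1})\ge\big(1+\frac{\eta}{2}\big)e(B_i)$. Iterating from $e(B_1)\ge\frac{\eta d}{4}$ for $\ell$ steps yields $e(B_{\ell+1})\ge n^{2}\cdot\frac{\eta d}{4}>e(G)$, a contradiction; so $|B_i|>(1-\eps)n$ for some $i\le\ell$. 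No degree truncation is needed.
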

    
    We will need a stronger version of the previous lemma, where we require that no colour is used too many times in the short rainbow paths. A similar idea was used in \cite{letzter2021tight} (see Lemma 5) in the context of tight paths.
    
    \begin{lemma} \label{lem:reachable-robust-new}
        Let $n, \ell, d, q, M \ge 1$, let $\eta \in (0,1)$ and let $\eps \in (0, \frac{1}{2}]$.
        Suppose that $\ell = \frac{4\log n}{\eta}$ and $d \ge \frac{20q\ell + 8M}{\eta}$.
        Let $G$ be a properly edge-coloured $(d, \eta, \eps)$-expander on $n$ vertices, let $x \in V(G)$, and let $F$ be a set of colours and vertices of size at most $M$. Then there is a set $U \subseteq V(G)$ of size at least $(1 - \eps)n$, and a collection  $\mathcal P = \{ P(u) : u \in U\}$ where for each $u \in U$, $P(u)$ is a rainbow path from $x$ to $u$ of length at most $\ell + 1$ that avoids the vertices and colours in $F$ and no colour appears in more than $\frac{n}{q}$ of the paths in $\mathcal P$.
    \end{lemma}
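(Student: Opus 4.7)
My plan is to construct the collection $\mathcal P$ greedily, adding one path at a time while dynamically maintaining a set $B$ of \emph{overused} colours that are then forbidden in subsequent iterations; this mirrors the strategy used by Letzter in \cite{letzter2021tight}. At any stage, $B$ will consist of the colours already appearing on at least $\lfloor n/q \rfloor$ of the paths currently in $\mathcal P$. Once a colour enters $B$, all future paths avoid it, which pins its final use-count at $\lfloor n/q \rfloor \le n/q$.

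Starting with $U = \emptyset$, $\mathcal P = \emptyset$, $B = \emptyset$, I would repeat the following until $|U| \ge (1 - \eps)n$. Apply \Cref{lem:reachable-new} with the forbidden set $F \cup B$ to obtain a set $V$ of at least $(1-\eps)n$ vertices reachable from $x$ by a rainbow path of length at most $\ell+1$ that avoids $F \cup B$. Since $|V| > |U|$, pick any $u \in V \setminus U$, let $P(u)$ be the corresponding path, add $u$ to $U$ and $P(u)$ to $\mathcal P$, and then recompute $B$. When the loop terminates, the resulting $\{P(u) : u \in U\}$ has the claimed properties.

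The substance of the argument lies in checking that \Cref{lem:reachable-new} is indeed applicable at every step. A direct double count gives $|B| \cdot \lfloor n/q \rfloor \le |\mathcal P|(\ell+1) \le n(\ell+1)$, hence (treating $n/q$ as an integer for the sketch; otherwise one absorbs a factor of at most $2$) $|B| \le q(\ell+1)$. Then $|F \cup B| \le M + q(\ell+1)$, and the hypothesis $d \ge (20q\ell + 8M)/\eta$ is calibrated precisely so that the degree requirement $d \ge (4\ell + 8(M + |B|))/\eta$ of \Cref{lem:reachable-new} is met at every iteration.

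The main (and really the only) delicacy is fixing the threshold defining $B$ so that simultaneously (i) any colour added to $B$ ends with use-count at most $n/q$, and (ii) the resulting size bound on $|B|$ combines cleanly with the degree hypothesis. Once this choice is in place, the loop runs for $(1-\eps)n$ iterations and the resulting collection $\mathcal P$ satisfies the conclusion of the lemma.
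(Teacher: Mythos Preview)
Your proposal is correct and is essentially the same argument as the paper's: both forbid the set of ``overused'' colours, bound its size by the double count $|B|\cdot \tfrac{n}{q}\le n(\ell+1)$, and then invoke \Cref{lem:reachable-new} with $F\cup B$ in place of $F$. The only cosmetic difference is that the paper phrases this as a one-shot maximality argument (take a largest $U$ with the desired property and derive a contradiction if $|U|<(1-\eps)n$) rather than an explicit iteration, but the content is identical.
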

    
    \begin{proof}
        Let $U$ be a largest set satisfying that for every $u \in U$ there is a rainbow path $P(u)$ from $x$ to $u$ of length at most $\ell + 1$ that avoids $F$, such that no colour appears in more than $\frac{n}{q}$ of the paths $P(u)$. Say that a colour is \emph{bad} if it appears on exactly $\frac{n}{q}$ of the paths $P(u)$ with $u \in U$, and let $C_{\bad}$ be the set of bad colours. Since each path $P(u)$ has length at most $\ell + 1$, we have 
        \begin{equation*}
            |C_{\bad}| \le \frac{n(\ell + 1)}{n/q} \le 2q\ell.
        \end{equation*}
        
       Since $d\ge \frac{20q\ell + 8M}{\eta}\geq \frac{4\ell+8(M+|C_{\bad})|}{\eta}$, by \Cref{lem:reachable-new} with $F\cup C_{\bad}$ playing the role of $F$, there is a set $U'$ with $|U'| \ge (1 - \eps)n$ such that for every $v \in U'$, there is a rainbow path $P'(v)$ from $x$ to $v$ of length at most $\ell + 1$ that avoids the colours and vertices in $F \cup C_{\bad}$. If $|U| < (1 - \eps)n$, then there is a vertex $v \in U' \setminus U$. The set $U \cup \{v\}$ (along with the paths $P(u)$ for $u \in U$ and $P'(v)$) contradicts the maximality of $U$. It follows that $|U| \ge (1 - \eps)n$, as required. 
    \end{proof}
    
    We would like to work with expanders that are close to regular. For this, we use the following lemma, which is a slight adaptation of Lemma 3.2 of \cite{montgomery2021C4}.
    \begin{lemma}\label{lem:regularization}
        Let $n \ge 2$ and $d \ge 36 \log n$.
        Let $G$ be a bipartite graph on $n$ vertices with minimum degree at least $d$. Then there exists a subgraph $H$ of $G$ with average degree at least $\frac{d}{12 \log{n}}$ and maximum degree at most $d$.
    \end{lemma}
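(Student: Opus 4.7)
The plan is to combine dyadic partitioning of the edges of $G$ with K\"onig's edge-colouring theorem; the main obstacle will be the final average-degree accounting.

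For each vertex $v$ I would set $\tau(v) := \lfloor \log_2 \deg_G(v)\rfloor$, which lies in $[\lfloor\log_2 d\rfloor,\lfloor\log_2 n\rfloor]$, and for each edge $uv$ assign the class $k(uv) := \max(\tau(u),\tau(v))$. There are at most $\log n$ classes, so pigeonhole yields a class $k^{*}$ whose edge set $E^{*}$ satisfies $|E^{*}|\geq |E(G)|/\log n \geq nd/(2\log n)$, and every vertex incident to $E^{*}$ has $G$-degree at most $D:=2^{k^{*}+1}$. If $D\leq d$ I would simply take $H=(V(E^{*}), E^{*})$: then $\Delta(H)\leq d$ and $d(H)\geq 2|E^{*}|/n \geq d/\log n$, which already exceeds $d/(12\log n)$.

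Otherwise $D>d$, and I would apply K\"onig's edge-colouring theorem to decompose $E^{*}$ into at most $D$ matchings, letting $H$ be the union of the $d$ largest of these matchings; then $\Delta(H)\leq d$ and $|E(H)|\geq d|E^{*}|/D$. Alternatively, one could include each edge of $E^{*}$ in $H$ independently with probability $p=d/(2D)$ and, using the hypothesis $d\geq 36\log n$, deduce $\Delta(H)\leq d$ with high probability via a Chernoff bound; the two routes give essentially the same bound on $|E(H)|$.

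The delicate step is bounding $|V(H)|$ tightly enough to obtain $d(H)\geq d/(12\log n)$. The key observation is that $V^{*}:=\{v:\tau(v)=k^{*}\}$ has $|V^{*}|\leq 2|E(G)|/2^{k^{*}}$ by the degree-sum bound, and every edge of $E^{*}$ is incident to $V^{*}$. Restricting $V(H)$ to the endpoints actually touched by the chosen matchings, and using the bipartite matching structure to show that these matchings largely re-use vertices in the neighbourhood of $V^{*}$ rather than spreading across fresh vertices, should give $|V(H)|=O(|V^{*}|\log n)$; combined with the edge count this yields the required average degree. I expect this tighter bound on $|V(H)|$, which uses bipartiteness in an essential way (through K\"onig), to be the main technical hurdle — the naive bound $|V(H)|\leq |V^{*}|(1+d)$ loses an extra factor of $d$ and is too weak.
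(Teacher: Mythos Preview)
Your outline has a genuine gap at exactly the place you flag as the ``main technical hurdle'', and neither of the two routes you sketch can close it.

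A minor point first: your easy case $D\le d$ never occurs. Since $\delta(G)\ge d$, every $\tau(v)\ge\lfloor\log_2 d\rfloor$, hence $k^*\ge\lfloor\log_2 d\rfloor$ and $D=2^{k^*+1}>d$. So you are always in the ``otherwise'' branch.

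The real problem is that neither ``take the $d$ largest matchings from a K\"onig decomposition'' nor ``keep each edge of $E^*$ with probability $d/(2D)$'' gives any control on $|V(H)|$, and your target bound $|V(H)|=O(|V^{*}|\log n)$ is simply false for those constructions. Take $G=K_{d,\,n-d}$ (with $n\gg d^{2}$): here $V^{*}$ is the small side of size $d$, $D\approx n$, and a K\"onig decomposition of $E^{*}=E(G)$ consists of $n-d$ matchings, each a perfect matching of the small side of size $d$. Nothing in ``pick the $d$ largest'' prevents those $d$ matchings from landing on pairwise disjoint $d$-subsets of the large side, giving $|V(H)|=d+d^{2}$ and $|E(H)|=d^{2}$, hence $d(H)\approx 2$. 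Independent edge sampling with $p\approx d/n$ behaves the same way: each vertex on the large side has expected $H$-degree $pd\approx d^{2}/n\ll 1$, so the sampled edges touch $\Theta(d^{2})$ distinct vertices there. Your sentence that the matchings ``largely re-use vertices in the neighbourhood of $V^{*}$'' is precisely what fails---matchings are designed \emph{not} to re-use vertices---and you offer no mechanism to force overlap.

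The paper's proof sidesteps this by sampling \emph{vertices}, not edges. It first trims the larger side $A$ so that every $A$-vertex has degree exactly $d$, dyadically partitions the other side $B$ by degree in this trimmed graph, picks a rich class $B_i$ (with degrees $\approx t$), and then keeps each vertex of $A$ independently with probability $p=d/(4t)$. This directly makes $\mathbb{E}|A'|=p|A|$; a simple edge count gives $|B_i|\le d|A|/t=4p|A|$; and a Chernoff bound (this is where $d\ge 36\log n$ is used) shows almost no $B_i$-vertex overshoots degree $d$ after the sampling. Because the vertex set is controlled from the outset, the average-degree calculation closes with the stated constant. The difference from your plan is not cosmetic: controlling vertices rather than edges is the idea that makes the argument work.
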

        
    \begin{proof}
        Let $\{A,B\}$ denote a bipartition of $G$ with $|A|\geq |B|$. Let $G'$ be obtained from $G$ by keeping exactly $d$ edges incident to each vertex in $A$. Then for each $v\in A$ we have $d_{G'}(v)=d$, and hence $e(G')=d|A|$.
        
        Let $m=\lceil \log n\rceil$. For each $i\in [m]$, let $B_i=\{v\in B: 2^{i-1}\leq d_{G'}(v)<2^i\}$. Denoting the set of isolated vertices in $B$ by $B_0$, we have $B \setminus B_0 = \cup_{i\in [m]} B_i$.
        By the pigeonhole principle, there exists an $i\in [m]$ for which $e(G'[A,B_i])\geq \frac{e(G')}{m} \geq \frac{d|A|}{2\log n}$. 
        Fix such $i$ and let $t = 2^{i-1}$.
        Then, by definition, each $v\in B_i$ has degree between $t$ and $2t$ in $G'[A,B_i]$.
        If $2t\leq d$, then $G'[A,B_i]$ has maximum degree at most $d$ and
        average degree at least $\frac{2 e(G'[A,B_i])}{|A|+|B_i|}\geq \frac{d}{2\log n}$.
        So the lemma holds. Hence, we may assume that $2t > d$.
        
        Set $p = \frac{d}{4t}$; then $0 < p < \frac{1}{2}$. Now let $A'\subseteq A$ be chosen by including each vertex in $A$ independently with probability $p$. 
        For convenience, write $G_i=G'[A,B_i]$ and $G_i'=G'[A',B_i]$. Then
        \begin{equation} \label{A'B'}
            \mathbb{E}[e(G_i')] =p \cdot e(G_i)\geq \frac{pd|A|}{2\log n}.
        \end{equation}
        Now, let $B'=\{v\in B_i: d_{G'_i}(v)\leq d\}$. 

        For each $v\in B_i$, the degree $d_{G'_i}(v)$ is binomially distributed with expectation $p \cdot d_{G_i}(v)$. Since $t\leq d_{G_i}(v)\leq 2t$, we have
        $\frac{d}{4}=pt\leq \mathbb{E}[d_{G'_i}(v)]\leq 2pt=\frac{d}{2}$. Therefore, using Chernoff's bound (see, e.g., Appendix A of \cite{alonspencer2016}), we have
        \begin{align*}
           \mathbb{E}[|B_i\setminus B'|]&=\sum_{v\in B_i} \mathbb{P}(v\in B_i\setminus B')= \sum_{v\in B_i} \mathbb{P}\!\left[d_{G'_i}(v)\geq d\right]
            \,\leq\, \sum_{v\in B_i} \mathbb{P}\!\left[d_{G'_i}(v)\geq 2 \mathbb{E}[d_{G'_i}(v)]\right] \\
            &\,\leq\, \sum_{v\in B_i} 2\cdot \exp\!\left(-\frac{\mathbb{E}[d_{G'_i}(v)]}{3}\right) 
            \leq\ \sum_{v\in B_i}2\cdot \exp\!\left(-\frac{d}{12}\right)
            \leq\ n \cdot 2e^{-3\log n}<\frac{1}{n}.
        \end{align*}
        This together with the fact that for any $A'$ and the corresponding $B'$, $e(G'[A',B_i\setminus B']) \leq n|B_i\setminus B'|$, implies
        \begin{equation} \label{A'notB'}
        \mathbb{E}[e(G'[A',B_i\setminus B'])]\leq n \cdot \mathbb{E}[|B_i\setminus B'|]  \le 1.
        \end{equation}
        
        By \eqref{A'B'} and \eqref{A'notB'},
        \[\mathbb{E}[e(G'[A',B'])\geq \frac{pd|A|}{2\log n}-1.\]
        Note that  $\mathbb{E}[|A'|]=p|A|$. Also, as $t|B_i|\leq e(G'[A, B_i]) \leq d|A|$, we have
        $|B'|\leq |B_i|\leq \frac{d}{t}|A|=4p|A|$. Hence $|A'|+|B'|\leq 5p|A|$.
        Let $d_0=\frac{d}{12 \log n}$.
        We have
        \begin{equation*}
            \mathbb{E}\!\left[e(G'[A', B'])-d_0(|A'|+|B'|)\right]
            \geq \frac{pd|A|}{2\log n} - 1  - \frac{5dp|A|}{12 \log n} 
            \geq \frac{pd|A|}{12 \log n} - 1
            \ge 0,
        \end{equation*}
        where the last inequality used $pd|A| = \frac{d^2 |A|}{4t} \ge 18^2(\log n)^2 \ge 12 \log n$ which holds since $t \le |A|$ and $d \ge 36 \log n$.
        Thus, there is a choice of $A'$ for which $e(G'[A', B'])-d_0(|A'|+|B'|)\geq 0$. Taking $H = G'[A', B']$, we have $d(H)\geq d_0= \frac{d}{12 \log n}$ and $\Delta(H)\leq d$, as desired.
    \end{proof}
    
    Our final preliminary result combines \Cref{lem:existence-expanders-new,lem:regularization} to show that every relatively dense graph contains an expander which is close to regular.

    \begin{lemma} \label{lem:bounded-max-deg-expander}
        Let $n, d \ge 1$ and let $\eps \in (0,\frac{1}{2}]$, and suppose that $d \ge 10^7 (\log n)^3$.
        Suppose that $G$ is a bipartite graph on $n$ vertices with average degree at least $d$. Then $G$ has a subgraph $H$ with the following properties.
        \begin{enumerate} [ref = \rm\arabic*]
            \item \label{itm:property-1}
                $H$ is a $(d', \eta, \eps)$-expander on $n'$ vertices, where $d' \ge \frac{d}{2500(\log n)^2}$ and $\eta \ge \frac{\eps}{100(\log n')^2}$,
            \item \label{itm:property-2}
                $H$ has maximum degree at most $2500 (\log n')^2 d'$.
        \end{enumerate}
    \end{lemma}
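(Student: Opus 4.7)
The plan is to chain the two preceding lemmas in a three-step pipeline: first reduce to a subgraph of $G$ with large minimum degree, then apply \Cref{lem:regularization} to bound the maximum degree, and finally apply \Cref{lem:existence-expanders-new} to extract the desired expander.

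Concretely, since $G$ has average degree at least $d$, iteratively deleting vertices of degree less than $d/4$ yields a subgraph $G_1\subseteq G$ on $n_1\le n$ vertices with $\delta(G_1)\ge d/4$ (the greedy process must terminate while edges remain, since it removes at most $n\cdot d/4\le e(G)$ edges). The hypothesis $d\ge 10^7(\log n)^3$ ensures $d/4\ge 36\log n_1$, so \Cref{lem:regularization} applies to the bipartite graph $G_1$ with $d/4$ in place of $d$, producing a subgraph $G_2\subseteq G_1$ on $n_2\le n_1$ vertices with average degree at least $\tfrac{d}{48\log n}$ and maximum degree at most $d/4$. Applying \Cref{lem:existence-expanders-new} to $G_2$ then yields an $n'$-vertex $(d',\eta,\eps)$-expander $H\subseteq G_2$ with $d'\ge d(G_2)/2\ge \tfrac{d}{96\log n}$ and $\eta=\tfrac{\eps}{2\log n_2}$; since $H\subseteq G_2$, we also have $\Delta(H)\le d/4$.

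The remaining task is to verify the three claimed bounds. The bound $d'\ge \tfrac{d}{2500(\log n)^2}$ is immediate from $d'\ge \tfrac{d}{96\log n}$ since $\log n\ge 1$. For the bounds involving $\log n'$, I would use that $H$ is $d'$-minimal, so \Cref{obs:d-expand} gives $\delta(H)\ge d'/2$, and hence $n'\ge d'/2+1\ge \tfrac{d}{192\log n}$. Combined with $d\ge 10^7(\log n)^3$ this yields a polynomial (in $\log n$) lower bound on $n'$, which is exactly what one uses to conclude both $\eta\ge \tfrac{\eps}{100(\log n')^2}$ (from $\eta\ge \tfrac{\eps}{2\log n}$) and $\Delta(H)\le d/4\le 2500(\log n')^2 d'$.

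The main delicate point is the parameter bookkeeping: making sure that the polynomial lower bound on $n'$ guaranteed by $d\ge 10^7(\log n)^3$ is strong enough that $\log n'$ compares correctly against $\log n$ in the last two inequalities. The apparently generous constants $2500$ and $100$ in the lemma statement are chosen precisely to absorb the various polylogarithmic losses accumulated through the three steps (the factor $12\log n_1$ from \Cref{lem:regularization}, the factor $2$ from passing to the expander, and the factor $2\log n_2$ in the $\eta$ produced by \Cref{lem:existence-expanders-new}).
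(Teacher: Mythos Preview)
Your three-step pipeline is the natural first attempt, but the final bookkeeping step does not close. The lower bound you can extract on $n'$ is only $n'\ge \delta(H)+1\ge d'/2+1\ge \tfrac{d}{192\log n}$, and with $d\ge 10^7(\log n)^3$ this gives merely $n'=\Omega((\log n)^2)$, hence $\log n'=\Theta(\log\log n)$. To deduce $\eta\ge \tfrac{\eps}{100(\log n')^2}$ from $\eta=\tfrac{\eps}{2\log n_2}\ge \tfrac{\eps}{2\log n}$ you would need $100(\log n')^2\ge 2\log n$, i.e.\ $\log n'\gtrsim \sqrt{\log n}$; but $\log n'$ may be only of order $\log\log n$, so the inequality fails for large $n$. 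The same obstruction kills the maximum-degree bound: from $\Delta(H)\le d/4$ and $d'\ge d/(96\log n)$ you need $24\log n\le 2500(\log n')^2$, which again requires $\log n'\gtrsim\sqrt{\log n}$.

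The paper handles exactly this difficulty by iterating your pipeline. One alternates regularisation and expander-extraction, producing a sequence $G_0,G_1,\ldots$ on $n_0\ge n_1\ge\ldots$ vertices, and stops at the first index $\ell$ for which $48\log n_\ell\ge\sqrt{48\log n_{\ell-1}}$. This stopping rule is precisely the missing ingredient: it guarantees $\log n_{\ell-1}\le 48(\log n_\ell)^2$, so the $\log n_{\ell-1}$ appearing in $\eta_\ell=\tfrac{\eps}{2\log n'_{\ell-1}}$ and in the max-degree bound can legitimately be replaced by $(\log n')^2$. A telescoping argument along the iteration shows the cumulative loss in average degree is at most $(48\log n)^{2}$, which is what the constant $2500$ absorbs.
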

    
    \begin{proof}
        Let $G_0 = G$. We run the following process, generating graphs $G_i$ for $i \ge 0$. For each graph $G_i$, we write $d_i = d(G_i)$ and $n_i = |V(G_i)|$.
        \begin{enumerate} [label = \rm(\alph*)]
            \item \label{itm:step-a}
                Let $H_i$ be a subgraph of $G_i$ with average degree at least $\frac{d_i}{24 \log n_i}$ and maximum degree at most $d_i$. Such a subgraph $H_i$ exists by \Cref{lem:regularization}, using the fact that every graph with average degree $d$ contains a subgraph with minimum degree at least $\frac{d}{2}$. To apply the lemma we need to verify that $d_i \ge 72 \log n_i$, which we shall do below.
            \item \label{itm:step-b}
                Write $n_i' = |V(H_i)|$.
                Let $G_{i+1}$ be a subgraph of $H_i$ which is a $(d_{i+1}, \eta_{i+1}, \eps)$-expander, where $d(G_{i+1}) = d_{i+1} \ge \frac{d(H_i)}{2} \ge \frac{d_i}{48 \log n_i}$ and $\eta_{i+1} = \frac{\eps}{2 \log n_i'}$. Such a subgraph exists by \Cref{lem:existence-expanders-new}, using the observation that if $G$ is a $(d, \eta, \eps)$-expander then it is a $(d(G), \eta, \eps)$-expander.
        \end{enumerate}
        
        \begin{claim}
            If $48 \log n_i < \sqrt{48 \log n_{i-1}}$ for $i \in [t]$ then the graphs $G_1, \ldots, G_t$ can be defined as above and are non-empty.
        \end{claim}
        
        \begin{proof}
            Notice that to prove the statement, it suffices to show that for $t$ as described, $d_i \ge 72 \log n_i$ for $i \in [t-1]$.
            
            We prove this by induction.
            It is easy to check that the statement is true for $t = 0$ (note that the condition on $t$ holds vacuously here). Indeed, we just need to check that $d_0 \ge 72 \log n_0$, which is the case as $d_0 = d$, $n_0 = n$ and $d \ge 10^7(\log n)^3$.
            
            Now suppose that $48 \log n_i < \sqrt{48 \log n_{i-1}}$ for $i \in [t]$ and that the inductive statement holds for $t' \le t$. This means that the process runs as described for all $i \in [t]$, and it remains to show that the $(t+1)$-st step can be performed, namely that $d_{t} \ge 72 \log n_{t}$.
            By the assumption on $t$, the following holds for every $i \in [t]$.
            \begin{equation*}
                48 \log n_i 
                \le (48 \log n_{i-1})^{1/2}
                \le \ldots \le (48 \log n_0)^{2^{-i}}
                = (48 \log n)^{2^{-i}}.
            \end{equation*}
            Since $d_i \ge \frac{d_{i-1}}{48 \log n_{i-1}}$ for $i \in [t]$ (see \ref{itm:step-b}), it follows that
            \begin{equation} \label{eqn:d}
                d_t \ge
                \frac{d_{t-1}}{48 \log n_{t-1}} 
                \ge \ldots 
                \ge \frac{d_0}{48 \log n_{t-1} \cdot \ldots \cdot 48 \log n_{0}} 
                \ge \frac{d}{(48 \log n)^{2^0 + \ldots + 2^{-(t-1)}}}
                \ge \frac{d}{(48 \log n)^2}.
            \end{equation}
            Using $d \ge 10^7 \log n$ and $n \ge n_t$, we find that $d_t \ge 10^3 \log n > 72 \log n_t$, as required.
        \end{proof}
        
        Let $\ell$ be minimum such that $48 \log n_{\ell} \ge \sqrt{48 \log n_{\ell - 1}}$. We claim that such $\ell$ exists. If not, then by the previous claim the process can be run forever and $n_i \ge 1$ for all $i \ge 0$, implying that $(48 \log n_i)_{i \ge 0}$ is decreasing, and thus $(n_i)_{i \ge 0}$ is an infinite decreasing sequence of positive integers, a contradiction. 
        
        We will show that $G_{\ell}$ satisfies the requirements of \Cref{lem:bounded-max-deg-expander}.
        Indeed, $G_{\ell}$ is a $(d_{\ell}, \eta_{\ell}, \eps)$-expander. 
        Using the proof of the above claim, inequality \eqref{eqn:d} holds for $t = \ell$, showing $d_{\ell} \ge \frac{d}{2500(\log n)^2}$.
        By choice of $G_{\ell}$, we have $\eta_{\ell} = \frac{\eps}{2\log n_{\ell-1}} \ge \frac{\eps}{96(\log n_{\ell})^2} \ge \frac{\eps}{100(\log n_{\ell})^2}$ (using $48 \log n_{\ell} \ge \sqrt{48 \log n_{\ell-1}}$). It follows that property \ref{itm:property-1} of the lemma holds.
        To see property \ref{itm:property-2}, note that $G_{\ell}$ has maximum degree at most $d_{\ell - 1}$ and $d_{\ell} \ge \frac{d_{\ell-1}}{48 \log n_{\ell-1}} \ge \frac{d_{\ell-1}}{(48 \log n_{\ell})^2} \ge \frac{d_{\ell-1}}{2500 (\log n_{\ell})^2}$ (again using $48 \log n_{\ell} \ge \sqrt{48 \log n_{\ell-1}}$).
    \end{proof}

\section{Counting rainbow cycles in weighted graphs} \label{sec:non-rainbow-walks}

	In this section we state several lemmas regarding (weighted) counts of homomorphic copies of paths and cycles in weighted graphs $G$. Three of these are weighted versions of lemmas from Janzer \cite{janzer2020rainbow}, and we will show how to deduce these from the original, unweighted versions.

	First, let us introduce some notation.
	Let $G$ be a graph with a weighting $\omega : E(G) \to \R^{\ge 0}$. We denote by $\maxw, \minw$ the maximum and minimum edge weights; that is, $\maxw=\max_{e\in E(G)}{\omega(e)}$ and $\minw=\min_{e\in E(G)}{\omega(e)}$. The \emph{weighted maximum degree of $G$}, denoted $\Deltaw(G)$, is 
	\begin{equation*}
		\Deltaw(G) := \max_{u \in V(G)} \sum_{v : \, uv \in E(G)} \omega(uv).
	\end{equation*}
	%The \emph{weight} of a subgraph $G'$ of $G$, denoted $\omega(G')$, is $\prod_{e \in E(G')} \omega(e)$.
   For any $t \ge 1$, the \emph{weight} of a walk $P\coloneqq u_0 u_1 \dots u_t $ is defined as $\omega(P) \coloneqq \prod_{i = 0}^{t-1} \omega(u_i u_{i+1})$, and the \emph{weight} of a closed walk $C \coloneqq u_0 u_1 \dots u_t u_0$ is defined as $\prod_{i = 0}^{t} \omega(u_i u_{i+1})$, where $u_{t+1} = u_0$.

    In a graph $G$, denote by $\Hom(P_k^{xy})$ the family of walks of length $k$ from $x$ to $y$. Similarly, let $\Hom(C_{2k}^{xy})$ be the family of closed walks of length $2k$ that start at $x$ and reach $y$ after $k$ steps. 
    Write $\hom(P_k^{xy})=|\Hom(P_k^{xy})|$ and $\hom(C_{2k}^{xy})=|\Hom(C_{2k}^{xy})|$. Given a weighting $\omega : E(G) \to \R^{\ge 0}$, we define the weighted homomorphism counts as follows: 
    		\begin{align*}
			\homw(P_k^{xy}) &= \sum_{P\in \Hom (P_k^{xy})}{\omega(P)}
   \\ \homw(C_{2k}^{xy}) &= \sum_{C\in \Hom(P_k^{xy})}{\omega(C)}
		\end{align*}

    The following relation between $\homw(P_k^{xy})$ and $\homw(C_{2k}^{xy})$ is very useful.
    \begin{equation} \label{eqn:hom-cycle-paths}
        \homw(C_{2k}^{xy}) = (\hom_{w}(P_k^{xy}))^2. 
    \end{equation}
    We also define $\homw(C_{2k})$ to be the total weight of the homomorphic copies of $C_{2k}$, namely 
    \begin{equation*}
        \homw(C_{2k}) = \sum_{\substack{x, y \in V(G)}} %\\ x \ne y} }
        \homw(C_{2k}^{xy}).
    \end{equation*}
    Similarly, we define $\homw(P_k) = \sum_{x, y \in V(G)} \homw(P_{k}^{xy})$.
        In a properly edge-coloured graph $G$, let $\Homstar(C_{2k}^{xy})$ be the family of all the closed walks in $\Hom(C_{2k}^{xy})$ that do not form a rainbow cycle of length $2k$.  Define $\homstar(C_{2k}^{xy})=|\Homstar(C_{2k}^{xy})|$. If the graph is weighted with a weighting $\omega : E(G) \to \R^{\ge 0}$, let  $\homwstar(C_{2k}^{xy})= \sum_{C\in \Homstar(C_{2k}^{xy})}{\omega(C)}$. Let
    \begin{align*}
        \Homstar(C_{2k}) 
        = \bigcup_{x,y\in V(G)} \Homstar(C_{2k}^{xy}),
    \end{align*} 
    and write $\homstar(C_{2k})=|\Homstar(C_{2k})|$ and $\homwstar(C_{2k})= \sum_{x,y\in V(G)}{\homwstar(C_{2k}^{xy})}$. 

    We will make use of the following two lemmas from a recent paper of Janzer \cite{janzer2020rainbow}. 

    \begin{lemma}[multigraph version of Lemma 2.2 from \cite{janzer2020rainbow}] \label{lem:janzer-vertices}
        Let $k \ge 2$ be an integer and let $G = (V, E)$ be a multigraph on $n$ vertices. Let $\sim$ be a symmetric binary relation on $V$ such that for every $u, v \in V$, there are at most $t$ edges $vw$ (counted with multiplicity) for which $u \sim w$. Then the number of homomorphic $2k$-cycles $(x_1, \ldots, x_{2k})$ in $G$ such that $x_i \sim x_j$ for some $i \neq j$ is at most
        \begin{equation*}
            32k^{3/2}t^{1/2}\Delta(G)^{1/2}n^{\frac{1}{2k}}\hom(C_{2k})^{1 - \frac{1}{2k}}.
        \end{equation*}
    \end{lemma} 
	Note that Lemma 2.2 in \cite{janzer2020rainbow} is phrased for simple graphs, where the condition was that for every $u,v\in V$ there are at most $t$ neighbours $w$ for which $u\sim w$. However, the same proof works for multigraphs with a modified condition as stated above. 
	Similarly, the next lemma is a multigraph version of Lemma 2.1 from \cite{janzer2020rainbow}, where again the same proof works.

	\begin{lemma}[multigraph version of Lemma 2.1 from \cite{janzer2020rainbow}] \label{lem:janzer-multigraph}
		Let $k \ge 2$ be an integer and let $G = (V, E)$ be a multigraph on $n$ vertices. Suppose that $\sim$ is a symmetric binary relation on $E$ such that for every $uv \in E$ and $w \in V$, there are at most $t$ edges $zw$ (counted with multiplicity) for which $uv \sim zw$. Then the number of homomorphic $2k$-cycles $(x_1, \ldots, x_{2k})$ in $G$ such that $x_i x_{i+1} \sim x_j x_{j+1}$ for some $i \neq j$ is at most
		\begin{equation*}
			32k^{3/2}t^{1/2}\Delta(G)^{1/2}n^{\frac{1}{2k}}\hom(C_{2k})^{1 - \frac{1}{2k}}.
		\end{equation*}
	\end{lemma}

	The following lemma is a variant of \Cref{lem:janzer-multigraph} which is applicable for edge-weighted graphs.
   	\begin{lemma} \label{lem:janzer-weighted}
		Let $k \ge 2$ be an integer and let $G = (V, E)$ be a graph on $n$ vertices with a given weighting $\omega : E(G) \to \R^{> 0}$. Suppose that $\sim$ is a symmetric binary relation on $E$ such that for every $uv \in E$ and $w \in V$, there are at most $t$ edges $wz$ for which $uv \sim wz$. Let $\homws(C_{2k})$ be the sum of weights of homomorphic $2k$-cycles $(x_1, \ldots, x_{2k})$ in $G$ such that $x_i x_{i+1} \sim x_j x_{j+1}$ for some $i \neq j$. Then
		\begin{equation*}
			\homws(C_{2k}) 
			\le 32k^{3/2}t^{1/2} \cdot \Deltaw(G)^{1/2} \cdot \maxw^{1/2} \cdot n^{\frac{1}{2k}} \cdot \hom_{\omega}(C_{2k})^{1 - \frac{1}{2k}}.
		\end{equation*}
	\end{lemma}

	We prove this lemma using Lemma~\ref{lem:janzer-multigraph}.\footnote{Alternatively, one can prove this by following the proof of Lemma 2.1 from \cite{janzer2020rainbow} and adapting it straightforwardly to the weighted version.}

	\begin{proof}[Proof of \Cref{lem:janzer-weighted}]
		We start by proving the lemma in the case where all the weights given by $\omega$ are integers. Let $G'$ be the multigraph on $V(G)$ where, for each edge $e$ in $G$, we add $\omega(e)$ copies of $e$ to $G'$.
		Notice that $\Delta(G') = \Deltaw(G)$.
		Let $t' := t \cdot \maxw$ and notice that for every $uv \in E(G')$ and $w \in V(G')$, there are at most $t'$ edges $zw$ such that $uv \sim zw$. 
		Let $h'=\hom(C_{2k})$ in $G'$. Observe that $\homw(C_{2k})$ in $G$ equals $h'$, and that $\homws(C_{2k})$ in $G$ is the number of homomorphic $2k$-cycles $(x_1, \ldots, x_{2k})$ in $G'$ such that $x_i x_{i+1} \sim x_j x_{j+1}$ for some $i \neq j$. Thus, by \Cref{lem:janzer-multigraph}, 
		\begin{align*}
			\homws(C_{2k}) 
			& \le 32k^{3/2}(t')^{1/2}\Delta(G')^{1/2} \cdot n^{\frac{1}{2k}} \cdot (h')^{1 - \frac{1}{2k}} \\
			& \le 32 k^{3/2} t^{1/2} \Deltaw(G)^{1/2} \cdot \maxw^{1/2} \cdot n^{\frac{1}{2k}} \cdot \homw(C_{2k})^{1 - \frac{1}{2k}}. 
		\end{align*}
		Now suppose that all weights given by $w$ are rational. Then there exists a positive integer $L$ such that the weighting $L \omega : E(G) \to \R^{\ge 0}$ (defined as $L \omega(e) = L \cdot \omega(e)$ for every $e \in E(G)$) is integer-valued. By the previous paragraph,
		\begin{align*}
			\hom_{L\omega}^*(C_{2k}) 
			& \le 32 k^{3/2} t^{1/2} \Delta_{L\omega}(G)^{1/2} \cdot ((L\omega)_{\max})^{1/2} \cdot n^{\frac{1}{2k}} \cdot \hom_{L\omega}(C_{2k})^{1 - \frac{1}{2k}}.
		\end{align*}
		Noting that $\hom_{L\omega}^*(C_{2k}) = L^{2k} \cdot \homws(C_{2k})$, $\hom_{L\omega}(C_{2k}) = L^{2k} \cdot \homw(C_{2k})$, $\Delta_{L\omega}(G) = L \cdot \Deltaw(G)$ and $(L\omega)_{\max} = L \cdot \maxw$, we get
		\begin{align*}
			\homws(C_{2k}) 
			& \le 32 k^{3/2} t^{1/2} \Deltaw(G)^{1/2} \cdot \maxw^{1/2} \cdot n^{\frac{1}{2k}} \cdot \homw(C_{2k})^{1 - \frac{1}{2k}}.
		\end{align*}
		Finally, for a general positive weighting $\omega$, let $(\omega_m)$ be a sequence of positive and rational weightings such that $\lim_{m \to \infty} \omega_m(e) = \omega(e)$ for every edge $e \in E$. Then,
		\begin{align*}
			\hom_{\omega_m}^*(C_{2k}) 
			& \le 32 k^{3/2} t^{1/2} \Delta_{\omega_m}(G)^{1/2} \cdot (\omega_m)_{\max}^{1/2} \cdot n^{\frac{1}{2k}} \cdot \hom_{\omega_m}(C_{2k})^{1 - \frac{1}{2k}},
		\end{align*}
		for every $m$. Taking the limits as $m$ tends to infinity, we get
		\begin{equation*}
			\homws(C_{2k}) 
			\le 32 k^{3/2} t^{1/2} \Deltaw(G)^{1/2} \cdot \maxw^{1/2} \cdot n^{\frac{1}{2k}} \cdot \homw(C_{2k})^{1 - \frac{1}{2k}}.
			\qedhere
		\end{equation*}
	\end{proof}
	Similarly, using Lemma~\ref{lem:janzer-vertices} instead of Lemma~\ref{lem:janzer-multigraph}, one can prove the following
	\begin{lemma} \label{lem:janzer-weighted-vertex}
		Let $k \ge 2$ be an integer and let $G = (V, E)$ be a graph on $n$ vertices with a given weighting $\omega : E(G) \to \R^{> 0}$. Suppose that $\sim$ is a symmetric binary relation on $V$ such that for every $u,v \in V$ there are at most $t$ edges $vw$ for which $u\sim w$. Let $\homws(C_{2k})$ be the sum of weights of homomorphic $2k$-cycles $(x_1, \ldots, x_{2k})$ in $G$ such that $x_i \sim x_j$ for some $i \neq j$. Then
		\begin{equation*}
			\homws(C_{2k}) 
			\le 32k^{3/2}t^{1/2} \cdot \Deltaw(G)^{1/2} \cdot \maxw^{1/2} \cdot n^{\frac{1}{2k}} \cdot \hom_{\omega}(C_{2k})^{1 - \frac{1}{2k}}.
		\end{equation*}
	\end{lemma}
  
	Next we show that in an appropriately weighted almost-regular properly edge-coloured graphs the number of degenerate homomorphic copies of $2k$-cycles is a `small' proportion of all possible copies. Here by degenerate we mean the copies which are either non-rainbow or are not isomorphic copies of $C_{2k}$.
    \begin{lemma} \label{lem:non-rainbow-hom}
        	%Let $G$ be a graph with a weighting $\omega : E(G) \to \R^{\ge 0}$. 
         Let $n, d, \mu, k, S \ge 1$ and let $\eta \in (0, 1)$. Suppose that $d \ge 2^{16} \mu^3 k^3 S^2  n^{1/k}$.
        Let $G$ be a properly edge-coloured graph with minimum degree at least $\frac{d}{2}$ and maximum degree at most $\mu d$, and define $\omega : E(G) \to \R^{\ge 0}$ by setting $\omega(xy)=1/\sqrt{d(x)d(y)}$ for every edge $xy$.
        Then $\homwstar(C_{2k}) \le \frac{1}{S} \homw(C_{2k})$.
    \end{lemma}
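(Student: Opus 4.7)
The plan is to apply \Cref{lem:janzer-edges} with the symmetric relation $\sim$ on $E(G)$ defined by $e \sim f$ if and only if $e$ and $f$ receive the same colour. A closed $2k$-walk $(x_1, \ldots, x_{2k})$ lies in $\Homstar(C_{2k})$ precisely when there exist $i \ne j$ with $x_i x_{i+1} \sim x_j x_{j+1}$, since this is exactly the failure of being rainbow. Because the edge-colouring is proper, for any edge $uv$ and any vertex $w$, there is at most one neighbour $z$ of $w$ such that the edge $wz$ shares its colour with $uv$; hence the relation $\sim$ satisfies the hypothesis of \Cref{lem:janzer-edges} with $t = 1$.

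Invoking that lemma and using the bound $\Delta(G) \le \mu d$ then gives
\begin{equation*}
    \homstar(C_{2k}) \;\le\; 32 k^{3/2} (\mu d)^{1/2} n^{1/(2k)} \hom(C_{2k})^{1 - 1/(2k)}.
\end{equation*}
To turn this into the claimed bound, I would lower bound $\hom(C_{2k})$ from below. Using the minimum degree assumption $\delta(G) \ge d/2$, every vertex starts at least $(d/2)^k$ walks of length $k$, so $\hom(P_k) \ge n (d/2)^k$. By Cauchy--Schwarz together with identity \eqref{eqn:hom-cycle-paths},
\begin{equation*}
    \hom(C_{2k}) \;=\; \sum_{x,y} \hom_{x,y}(P_k)^2 \;\ge\; \frac{\hom(P_k)^2}{n^2} \;\ge\; (d/2)^{2k}.
\end{equation*}

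Finally, substituting this lower bound into the upper bound on $\homstar(C_{2k})$, the inequality $\homstar(C_{2k}) \le \frac{1}{S} \hom(C_{2k})$ reduces (after dividing through by $\hom(C_{2k})$ and raising to the $2k$-th power) to the requirement $d \ge (64 k^{3/2} S)^2 \mu\, n^{1/k} = 4096\, k^3 S^2 \mu\, n^{1/k}$, which is comfortably implied by the hypothesis $d \ge 2^{14} k^3 S^2 \mu\, n^{1/k}$. There is no real obstacle here once \Cref{lem:janzer-edges} is available: the only thing to verify carefully is that the relation ``same colour'' correctly captures non-rainbowness of a closed walk (including the degenerate case where two positions of the walk use literally the same edge, which is automatic since that edge shares its colour with itself) and that $t = 1$ follows from properness of the colouring.
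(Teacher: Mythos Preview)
Your argument has a genuine gap in the very first step. You assert that a closed $2k$-walk lies in $\Homstar(C_{2k})$ \emph{precisely} when two of its edges share a colour. But the paper defines $\Homstar_{x,y}(C_{2k})$ as the set of closed walks that do not form a \emph{rainbow copy of $C_{2k}$}, not merely the set of non-rainbow walks. A closed walk can use $2k$ edges of pairwise distinct colours and still fail to be a copy of $C_{2k}$ because some vertex is visited twice (think of a figure-eight). Such walks belong to $\Homstar(C_{2k})$ but are not detected by your edge relation $\sim$.

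The repair is exactly what the paper does: in addition to the edge relation ``same colour'' (handled by \Cref{lem:janzer-edges} with $t=1$), introduce the vertex relation $u \sim v \iff u=v$ and apply \Cref{lem:janzer-vertices}, again with $t=1$. Summing the two bounds replaces your constant $32$ by $64$, giving
\[
\homstar(C_{2k}) \le 64\,k^{3/2}(\mu d)^{1/2} n^{1/(2k)} \hom(C_{2k})^{1-1/(2k)},
\]
and the rest of your computation goes through, now genuinely requiring $d \ge 2^{14} k^3 S^2 \mu\, n^{1/k}$ rather than the $2^{12}$ you obtained. Everything else in your write-up (the Cauchy--Schwarz lower bound $\hom(C_{2k}) \ge (d/2)^{2k}$ and the final substitution) matches the paper's proof.
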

    
    \begin{proof}
        We first give a lower bound on $\homw(C_{2k})$. For this, note that 
		\begin{equation*}
			\homw(P_k) =\sum_{W: \,W=x_0\dots x_k}{\omega(W)} \ge n \cdot \left(\frac{d}{2}\right)^k(\minw)^k \geq  n \cdot \left(\frac{1}{2\mu}\right)^k,
		\end{equation*}
        where in the last inequality we used that $\minw \geq \frac{1}{\Delta(G)}\geq \frac{1}{\mu d}$. Hence,
        \begin{align}
			\begin{split}
				\label{eq:lowerboundhomC2k}
				\homw(C_{2k}) 
				& = \sum_{x, y \in V(G)} \big(\homw(P_k^{xy})\big)^2 \\
				& \ge \frac{1}{n^2} \left(\sum_{x, y \in V(G)}\homw(P_k^{xy}) \right)^2
			%\ge \frac{1}{n^2} \Big(\sum_{\substack{x, y \in V(G) \\ x \ne y} } \homw(P_k^{xy}) \Big)^2
				= \bigg(\frac{\homw(P_k)}{n}\bigg)^2
				\ge \left(\frac{1}{2\mu}\right)^{2k},
			\end{split}
        \end{align}
        where the first inequality follows by convexity.
        
        Let $\sim_e$ be the binary relation on $E(G)$ where $e \sim_e f$ if and only if $e$ and $f$ have the same colour.  Because $G$ is properly edge-coloured, for every edge $uv$ and vertex $w$, there is at most one edge $wz$ for which $uv \sim_e wz$.
        Let $\sim_v$ be the binary relation defined on $V(G)$ where $u \sim_v w$ if and only if $u = w$. Then, trivially, for every $u, v \in V$ there is at most one edge $vw$ (namely $uv$) for which $u \sim_v w$.
        Apply \Cref{lem:janzer-weighted,lem:janzer-weighted-vertex} with $\sim_e$, $\sim_v$ in place of $\sim$, respectively (so $t$ is taken to be $1$ in both lemmas), to obtain the desired upper bound on $\homwstar(C_{2k})$, as follows. 
        \begin{align*}
            \homwstar(C_{2k}) 
            & \le 64 k^{3/2} \cdot (\Deltaw(G))^{1/2} \cdot \maxw^{1/2} \cdot n^{\frac{1}{2k}} \cdot \homw(C_{2k})^{1 - \frac{1}{2k}} \\
            &\leq 64k^{3/2} \cdot (2\mu)^{1/2}\cdot \left(\frac{2}{d}\right)^{1/2} \cdot n^{\frac{1}{2k}} \cdot (2\mu) \cdot \homw(C_{2k})\\
            &=\frac{2^8k^{3/2}\mu^{3/2}n^{\frac{1}{2k}}}{d^{\frac{1}{2}}}\cdot \homw(C_{2k})\\
            &\le \frac{1}{S} \cdot \homw(C_{2k}). 
        \end{align*}
        In the inequalities above we used that $\maxw \leq  \frac{1}{\delta(G)} \leq \frac{2}{d}$ and $\Deltaw(G)\leq \Delta(G) \cdot \maxw \leq 2\mu$ and finally, the inequality $\homw(C_{2k}) \ge \left(\frac{1}{2\mu}\right)^{2k}$ proved in \eqref{eq:lowerboundhomC2k}.
    \end{proof}

    It would be useful to be able to find many pairwise colour-disjoint and vertex-disjoint paths between a given pair $(x, y)$ of vertices. To that end, we use Lemma~\ref{lem:theta} which will immediately follow from Lemma~\ref{lem:theta-general} stated below, whose proof uses the arguments of Theorem 3.7 in \cite{janzer2020rainbow}. 
    For a graph $G$, and vertices $x, y \in V(G)$, and walks $P, Q \in \Hom(P_k)$, let $PQ$ denote the closed walk in $\Hom(C_{2k}^{xy})$ obtained by concatenating $P$ and the reverse of $Q$. 

    %The following lemma shows that this can be deduced from an upper bound on $\homstar_{x, y}(C_{2k})$ in terms of $\homw(C^{2k}_{xy})$. 
   
    \begin{lemma} \label{lem:theta-general}
        Let $k, s \ge 1$ be integers. Let $G$ be a graph with a given weighting $\omega : E(G) \to \R^{> 0}$ and let $x, y \in V(G)$. Let $\mathcal B$ be a subfamily of $\Hom(C_{2k}^{xy})$ satisfying $\sum_{F\in \mathcal{B}}{\omega(F)} < \frac{1}{s^2}\homw(C_{2k}^{xy})$. Then there exist walks $P_1,\dots, P_s \in \Hom(P_k^{xy})$ such that $P_iP_j\notin \mathcal B$ for every distinct $i, j \in [s]$.
    \end{lemma}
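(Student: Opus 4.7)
The plan is to prove this by a direct probabilistic argument, exploiting the bijection behind equation \eqref{eqn:hom-cycle-paths}. Write $N = \hom_{x,y}(P_k)$; note that the hypothesis $|\mathcal B| < \frac{1}{s^2} \hom_{x,y}(C_{2k})$ together with $\hom_{x,y}(C_{2k}) = N^2$ forces $N \ge 1$, and rewrites as $|\mathcal B| < N^2/s^2$. The key observation is that the map $\Hom_{x,y}(P_k) \times \Hom_{x,y}(P_k) \to \Hom_{x,y}(C_{2k})$ sending $(P, Q) \mapsto PQ$ is a bijection: any closed walk $(x_0, x_1, \ldots, x_{2k})$ with $x_0 = x_{2k} = x$ and $x_k = y$ decomposes uniquely as the first $k$-walk $(x_0, \ldots, x_k)$ from $x$ to $y$ followed by the reverse of the second $k$-walk $(x_{2k}, x_{2k-1}, \ldots, x_k)$ from $x$ to $y$.

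I would then sample walks $P_1, \ldots, P_s$ independently and uniformly at random from $\Hom_{x,y}(P_k)$. For each ordered pair $(i, j)$ with $i \ne j$, the pair $(P_i, P_j)$ is uniform on $\Hom_{x,y}(P_k)^2$, so the bijection gives
\begin{equation*}
    \Pr[P_i P_j \in \mathcal B] \,=\, \frac{|\mathcal B|}{N^2} \,<\, \frac{1}{s^2}.
\end{equation*}
A union bound over the $s(s-1)$ ordered pairs yields
\begin{equation*}
    \Pr[\exists \, i \ne j : P_i P_j \in \mathcal B] \,<\, \frac{s(s-1)}{s^2} \,<\, 1,
\end{equation*}
so with positive probability no such pair is bad, proving the existence of the desired $P_1, \ldots, P_s$.

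There is essentially no hard step here; the only minor subtlety to flag is that the lemma does not demand the $P_i$ be distinct, so the random sampling with repetition is legitimate, and the cases where $P_i = P_j$ (whose closed walk $P_i P_i$ may or may not lie in $\mathcal B$) are subsumed by the same probability calculation. Should distinctness ever be needed downstream, one can note that $\mathcal B$ is strictly smaller than $N^2/s^2 \le N^2$, so $N \ge s$ whenever $|\mathcal B| \ge \binom{s}{2}$ and the same union bound then easily accommodates distinctness, but as written this is not required.
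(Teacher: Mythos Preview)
Your proof is correct and follows essentially the same approach as the paper's: sample $P_1,\dots,P_s$ independently and uniformly from $\Hom_{x,y}(P_k)$, observe that each $P_iP_j$ is then uniform in $\Hom_{x,y}(C_{2k})$, and apply a union bound over the $s(s-1)<s^2$ ordered pairs.
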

    
    \begin{proof} Randomly and independently choose
		$s$ members $P_1,\dots, P_s$ of $\Hom(P_k^{xy})$, where for each $i \in [s]$ and $P\in \Hom(P_k^{xy})$, we have
		$\Pr(P_i=P)=\frac{\omega(P)}{\hom_w(P_k^{xy})}$. Note that this is indeed a probability distribution, and $\Hom(C_{2k}^{xy})=\{PQ: P,Q\in \Hom(P_k^{xy})\}$. 
        For any distinct $i, j \in [s]$ and any $P,Q\in \Hom(P_k^{xy})$,
		\begin{equation*}
			\Pr(P_i=P, \, P_j=Q)
			= \frac{\omega(P)}{\homw(P_k^{xy})} \cdot \frac{\omega(Q)}{\homw(P_k^{xy})} 
			=\frac{\omega(PQ)}{\hom_w(C_{2k}^{xy})}.
		\end{equation*}
        Thus, the probability that $P_iP_j\in \mathcal B$ is 
        $\frac{\sum_{F\in \mathcal{B}}{\omega(F)}}{\homw(C_{2k}^{xy})} < \frac{1}{s^2}$. Hence, by the union bound, with positive probability, $P_i P_j \notin \mathcal{B}$ for every distinct $i, j \in [s]$.
    \end{proof}
    
    \begin{lemma} \label{lem:theta}
        Let $k, s \ge 1$ be integers. Let $G$ be a properly edge-coloured graph with a given weighting $\omega : E(G) \to \R^{> 0}$ and let $x, y \in V(G)$. Suppose that $\homwstar(C_{2k}^{xy}) < \frac{1}{s^2}\homw(C_{2k}^{xy})$. Then there are $s$ pairwise colour-disjoint  and internally vertex-disjoint rainbow paths of length $k$ from $x$ to $y$.   
    \end{lemma}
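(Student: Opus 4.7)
The plan is to derive this lemma as an immediate consequence of \Cref{lem:theta-general} applied with the bad family taken to be $\mathcal{B} = \Homstar_{x,y}(C_{2k})$. The hypothesis $\homstar_{x,y}(C_{2k}) < \frac{1}{s^2}\hom_{x,y}(C_{2k})$ is exactly the size condition required by \Cref{lem:theta-general}, so the lemma immediately yields walks $P_1, \ldots, P_s \in \Hom_{x,y}(P_k)$ with the property that $P_i P_j \notin \Homstar_{x,y}(C_{2k})$ for all distinct $i, j \in [s]$.

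The remaining step is to interpret what it means for $P_i P_j$ to lie outside $\Homstar_{x,y}(C_{2k})$. By definition, this says precisely that $P_i P_j$ forms a rainbow copy of $C_{2k}$ in $G$. Unpacking this, the closed walk $P_i P_j$ must be an injective homomorphism whose $2k$ edges all receive distinct colours. Injectivity of $P_i P_j$ forces $P_i$ and $P_j$ to share no vertex other than $x$ and $y$, so they are internally vertex-disjoint (and in particular each $P_i$ is itself a path). The condition that all $2k$ edge colours are distinct forces $P_i$ and $P_j$ to be colour-disjoint, and also forces each individual $P_i$ to be rainbow (since its edges appear as a subset of the $2k$ edges of $P_iP_j$).

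Applying this to every pair $i \neq j$ simultaneously, the walks $P_1, \ldots, P_s$ are pairwise internally vertex-disjoint, pairwise colour-disjoint, and each is a rainbow $k$-path from $x$ to $y$, exactly as required. There is no real obstacle here once one observes that the single condition \textquotedblleft$P_iP_j$ is a rainbow $2k$-cycle\textquotedblright\ neatly encodes both the vertex-disjointness and the colour-disjointness demanded by the conclusion; the content of the lemma lies entirely in \Cref{lem:theta-general}.
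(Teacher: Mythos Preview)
Your proof is correct and follows essentially the same route as the paper's own proof: apply \Cref{lem:theta-general} with $\mathcal{B} = \Homstar_{x,y}(C_{2k})$, and then unpack the condition that each $P_iP_j$ is a rainbow $C_{2k}$ to read off the required internal vertex-disjointness, colour-disjointness, and rainbowness of the $P_i$. Your write-up is slightly more detailed than the paper's in spelling out this last step, but the argument is the same.
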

    
    \begin{proof}
		By Lemma~\ref{lem:theta-general} applied to $\mathcal{B}=\Homstar(C_{2k}^{xy})$ there exist walks $P_1, \dots, P_s \in \Hom(P_{k}^{xy})$ satisfying $P_i P_j \notin \Homstar(C_{k}^{xy})$ for every distinct $i, j \in [s]$.
        In other words, $P_i P_j$ is a rainbow copy of $C_{2k}$ for every distinct $i, j \in [s]$. This means that $P_1, \dots, P_s$ are pairwise colour-disjoint and internally vertex-disjoint paths of length $k$ from $x$ to $y$, as desired. 
    \end{proof}

\section{Counting walks in expanders} \label{sec:random-walks}

    In this section we exploit the connection between the mixing time of a random walk on a graph $G$ and expansion properties of $G$. A lot of the notation and results that we use can be found in~\cite{lovasz1993random}.
    
    Suppose $G=(V,E)$ is a connected graph where $V = [n]$. 
    Consider a random walk on $V(G)$, where we start at some vertex $v_0$ and at the $i$-th step we move from $v_i$ to one of its neighbours, denoted by $v_{i+1}$, where
    each neighbour of $v_i$ is chosen as $v_{i+1}$ with probability $\frac{1}{d(v_i)}$. The sequence of vertices $(v_i)_{i \ge 0}$ defines a Markov chain. 
    Let $M$ be the $n \times n$ matrix of transition probabilities of the Markov chain, namely $M_{v, u}$ is the probability of stepping from $v$ to $u$; so $M_{v, u}= \frac{1}{d(v)}$ if $vu \in E(G)$, and $M_{v, u} = 0$ otherwise. 
    %Denote by $D$ the $n \times n$ diagonal matrix with $D_{v, v} = \frac{1}{d(v)}$ for $v \in [n]$, and let $A$ be the adjacency matrix of $G$. Then $M=D A$. 
    So the probability that a random walk starting at vertex $v$ reaches $u$ in $t$ steps is $(M^t)_{v, u}$.

    \begin{definition}
    \label{def19}
        Let $G$ be a graph on the vertex set $[n]$. 
        Let $D = D(G)$ denote the diagonal $n \times n$ matrix where  $D_{v,v}=\frac{1}{d(v)}$ for each $v\in [n]$. Let $A = A(G)$ be the adjacency matrix of $G$, let $M(G) = D A$ and let $N(G) =D^{1/2}AD^{1/2}$. Note that the matrix $N(G)$ is symmetric, so it has $n$ real eigenvalues. Let $\lam_1(N)\geq \lam_2(N)\geq \dots \geq \lam_n(N)$ denote the eigenvalues of $N:=N(G)$. 
    \end{definition}

    \begin{lemma}\label{lem:markovchain}
        Let $G$ be a bipartite graph, with a bipartition $\{X, Y\}$, on the vertex set $[n]$ with $m$ edges and no isolated vertices. 
        Let $M=D(G)A(G)$ and $N = N(G)$.
        Then for every $v,u \in V(G)$ and integer $k \ge 1$, we have
        \begin{equation*}
            \left|(M^k)_{v, u} - \frac{d(u)}{2m}\left(1 + (-1)^{k + \one(v \in X) + \one(u \in X)}\right) \right|
            \leq \sqrt{\frac{d(u)}{d(v)}} \cdot \big(\lam_2(N)\big)^k.
        \end{equation*}
    \end{lemma}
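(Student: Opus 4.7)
The plan is to use the spectral decomposition of the symmetric matrix $N$ and relate $M^k$ to $N^k$. First, observe that $M = DA = D^{1/2}(D^{1/2}AD^{1/2})D^{-1/2} = D^{1/2} N D^{-1/2}$, so $M^k = D^{1/2} N^k D^{-1/2}$, which gives
\begin{equation*}
(M^k)_{v,u} \,=\, \sqrt{\frac{d(u)}{d(v)}} \cdot (N^k)_{v,u}.
\end{equation*}
Since $N$ is symmetric, it admits an orthonormal basis of eigenvectors $\phi_1, \ldots, \phi_n$ with real eigenvalues $\lambda_1 \geq \ldots \geq \lambda_n$, and $(N^k)_{v,u} = \sum_{i=1}^n \lambda_i^k \phi_i(v)\phi_i(u)$.

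The next step is to identify the two extremal eigenvalue contributions explicitly. A direct verification shows that $\phi_1(v) = \sqrt{d(v)/(2m)}$ is a unit eigenvector of $N$ with eigenvalue $1$, and using bipartiteness (every neighbour of $u$ lies on the opposite side of the bipartition from $u$) one checks that $\phi_n(v) = \sqrt{d(v)/(2m)} \cdot (-1)^{\mathbbm{1}(v \in X)}$ is a unit eigenvector with eigenvalue $-1$. The contributions of these two terms to $\sqrt{d(u)/d(v)} \cdot (N^k)_{v,u}$ are respectively $\frac{d(u)}{2m}$ and $(-1)^{k + \mathbbm{1}(v\in X) + \mathbbm{1}(u\in X)} \cdot \frac{d(u)}{2m}$, which sum to precisely the main term appearing in the lemma.

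It remains to bound the contribution of $\lambda_2, \ldots, \lambda_{n-1}$. The key observation here is the bipartite symmetry of the spectrum of $N$: if $\phi$ is an eigenvector with eigenvalue $\lambda$, then $\phi'$ defined by $\phi'(v) = (-1)^{\mathbbm{1}(v\in X)}\phi(v)$ is an eigenvector with eigenvalue $-\lambda$, so the eigenvalues of $N$ pair up as $\pm\lambda$. In particular $\lambda_{n-1} = -\lambda_2$, hence $|\lambda_i| \leq \lambda_2$ for every $2 \leq i \leq n-1$. Combining this with Cauchy--Schwarz and orthonormality,
\begin{equation*}
\biggl|\sum_{i=2}^{n-1} \lambda_i^k \phi_i(v)\phi_i(u)\biggr|
\,\leq\, \lambda_2^k \sum_{i=2}^{n-1} |\phi_i(v)\phi_i(u)|
\,\leq\, \lambda_2^k \sqrt{\textstyle\sum_i \phi_i(v)^2}\sqrt{\textstyle\sum_i \phi_i(u)^2}
\,=\, \lambda_2^k.
\end{equation*}
Multiplying by $\sqrt{d(u)/d(v)}$ yields the desired bound. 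The main conceptual step is the bipartite symmetry that forces $|\lambda_i| \leq \lambda_2$ throughout the interior of the spectrum; once that is in hand, the rest is a routine spectral computation.
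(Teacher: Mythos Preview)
Your proof is correct and follows essentially the same route as the paper's: conjugate $M$ to the symmetric matrix $N$, use the spectral decomposition of $N^k$, peel off the eigenvectors $\sqrt{d(\cdot)/2m}$ and its sign-flipped bipartite twin corresponding to eigenvalues $\pm 1$, invoke the bipartite spectral symmetry $\lambda_i = -\lambda_{n+1-i}$ to get $|\lambda_i|\le \lambda_2$ for $2\le i\le n-1$, and bound the remainder by Cauchy--Schwarz together with $\sum_i \phi_i(v)^2 = 1$. The only point you leave implicit that the paper spells out is why $1$ is actually the \emph{largest} eigenvalue (the paper cites Perron--Frobenius; equivalently, $N$ is similar to the stochastic matrix $M$), which is needed so that the bound really is $\lambda_2(N)^k$ rather than the $k$-th power of some other eigenvalue.
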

    
    Note that Lemma~\ref{lem:markovchain} says that when $k$ is even and both $v,u$ are in the same part or when $k$ is odd and $v,u$ are in different parts then   $ \left|(M^k)_{v, u} - \frac{d(u)}{m} \right|
            \leq \sqrt{\frac{d(u)}{d(v)}} \cdot \big(\lam_2(N)\big)^k.$ Note that when $k$ is even and $v$ and $u$ are in different parts or when $k$ is odd and $v$ and $u$ are in the same part then $(M^k)_{v, u}=0$. 
    \begin{proof}
        For any vector $w = (w_1, \ldots, w_n)^T$, let $\overline{w}$ be the vector $(w_1', \ldots, w_n')^T$ where $w_i' = w_i$ when $i \in X$ and $w_i' = -w_i$ when $i \in Y$. It is easy to check that $N \overline{w} = -\overline{Nw}$. Hence, if $w$ is an eigenvector of $N$ with eigenvalue $\lam$, then $\overline{w}$ is an eigenvector of $N$ with eigenvalue $-\lam$. It follows that $\lam_i = -\lam_{n+1-i}$ for $i \in [n]$. In particular, $|\lam_i| \le \lam_2$ for every $i \in \{2, \ldots, n-1\}$.
        
        One can check that $w_1$, defined as follows, is a unit eigenvector of $N$ with eigenvalue $1$. 
        \begin{equation*}
            w_1 = \frac{1}{\sqrt{2m}}\left(\sqrt{d(1)}, \sqrt{d(2)}, \dots, \sqrt{d(n)}\right)^T.
        \end{equation*}
        By the Frobenius--Perron theorem, since the entries of $N$ are non-negative and the entries of $w_1$ are positive, we have $\lam_1 = 1$. As explained above, it follows that $\overline{w_1}$ is a unit eigenvector of $N$ with eigenvalue $\lam_n = -1$. Write $w_n = \overline{w_1}$, and let $w_i$ be a unit eigenvector of $N$ with eigenvalue $\lam_i$ for $i \in \{2, \ldots, n-1\}$, such that $w_1, \ldots, w_n$ are orthogonal to each other. Note that $w_i$ is an eigenvector of $N^k$ with eigenvalue $(\lambda_i)^k$, for each $i \in [n]$.
        Since $N^k$ is a symmetric matrix and $\{w_1, \ldots, w_n\}$ is an orthonormal eigenbasis for $N^k$, we may write $N^k$ in the spectral form as follows (using $\lam_1 = 1$ and $\lam_n = -1$).
        \begin{equation*}
            N^k
            = \sum_{i=1}^n{(\lam_i)^k w_i (w_i)^T} 
            = w_1(w_1)^T + (-1)^k \,\overline{w_1}(\overline{w_1})^T + \sum_{i=2}^{n-1}{(\lam_i)^k w_i (w_i)^T}.
        \end{equation*}
 We also have $D^{1/2}ND^{-1/2} = DA = M$.
        Therefore,
        \begin{align*}
            M^k 
            &= D^{1/2}N^k D^{-1/2} \\
            &= D^{1/2} w_1 (w_1)^T  D^{-1/2} + (-1)^kD^{1/2} \,\overline{w_1} (\overline{w_1})^T  D^{-1/2} + \sum_{i=2}^{n-1}{(\lam_i)^k D^{1/2} w_i (w_i)^T D^{-1/2} }
        \end{align*}
        Let $Q=  D^{1/2} w_1 (w_1)^T  D^{-1/2} + (-1)^kD^{1/2} \,\overline{w_1} (\overline{w_1})^T  D^{-1/2}$. Then
              \begin{align*}
            M^k 
           = Q + \sum_{i=2}^{n-1}{(\lam_i)^k D^{1/2} w_i (w_i)^T D^{-1/2} }.
        \end{align*}  
        Hence
        \begin{equation}
        \label{eq:P't}
        (M^k)_{v, u} 
            = Q_{v, u} + \sum_{i=2}^{n-1}{(\lam_i)^k  w_{i, v} w_{i, u} \sqrt{\frac{d(u)}{d(v)}}}.
        \end{equation}
        
        Let $W$ be the matrix whose rows are $w_1, \ldots, w_n$. Then $W W^T = I$, implying $W^TW = I$. For each $v \in [n]$, since $(W^TW)_{v,v}=1$, we have $\sum_{i = 1}^n |w_{i, v}|^2 = 1$, so $\sum_{i = 2}^{n-1} |w_{i, v}|^2 \le 1$.
        By the Cauchy-Schwarz inequality,  $\sum_{i = 2}^{n-1} | w_{i, v}w_{i, u}| \le 
        \sqrt{\sum_{i=2}^{n-1} |w_{i,v}|^2} \sqrt{\sum_{i=2}^{n-1}  |w_{i,u}|^2}\leq 1$. 
        Since $|\lam_i| \le \lam_2$ for every $i \in \{2, \ldots, n-1\}$, the inequality in \eqref{eq:P't} implies
        \begin{equation*}
            \left| (M^k)_{v, u} - Q_{v, u} \right| 
            \le \sum_{i = 2}^{n-1} |\lam_i|^k |w_{i, u} w_{i, v}| \cdot \sqrt{\frac{d(u)}{d(v)}}
            \le (\lam_2)^k \sqrt{\frac{d(u)}{d(v)}}.
           % \qedhere
        \end{equation*}
        Finally, a straightforward calculation shows that  
        $Q_{v, u} = \frac{d(u)}{2m}\left(1 + (-1)^{k + \one(v \in X) + \one(u \in X)}\right)$ for all $v,u \in [n]$, as desired.
    \end{proof}
  
    \begin{definition} \label{def:conductance}
        For a graph $G$ with $m$ edges, let $\pi(v)= \frac{d(v)}{2m}$, and for any $S \subseteq V(G)$,  let $\pi(S) \coloneqq \sum_{s\in S}{\pi(s)}$; observe that $\pi(S) \le 1$ for every $S \subseteq V(G)$. Define the \emph{conductance} of a set $S$, denoted by $\Phi(S)$, as
        \begin{equation*}
            \Phi(S) \coloneqq \frac{e(S,S^c)}{2m \cdot \pi(S)\pi(S^c)}, 
        \end{equation*}
        and let the \emph{conductance} of a graph $G$, denoted by $\Phi_G$, be defined as
        \begin{equation*}
            \qquad \Phi_G \coloneqq \min_{S \subseteq V(G)} \Phi(S).
        \end{equation*}
    \end{definition}
   
    \begin{theorem}[Theorem 5.3 in \cite{lovasz1993random}] \label{thm:upperboundoneigenvalue} 
        Let $G$ be a graph and let $\lam_2 = \lam_2(N(G))$. Then $\lam_2 \le 1 - \frac{\Phi_G^2}{8}$.
    \end{theorem}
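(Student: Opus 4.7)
The plan is to prove this via a Cheeger-type inequality, relating the spectral gap of $N$ to the conductance $\Phi_G$. First I would apply the variational characterisation of $\lam_2(N)$. Since $w_1 = \frac{1}{\sqrt{2m}}(\sqrt{d(1)}, \dots, \sqrt{d(n)})^T$ is a unit eigenvector of $N$ with eigenvalue $\lam_1 = 1$ (as observed in the proof of \Cref{lem:markovchain}), we have $1 - \lam_2(N) = \min_{x \perp w_1,\, x \neq 0}\, x^T(I - N)x / x^T x$. Substituting $y_i = x_i/\sqrt{d(i)}$ and expanding gives $x^T(I-N)x = \sum_{ij \in E}(y_i - y_j)^2$ and $x^Tx = \sum_i d(i) y_i^2$, while the orthogonality $x \perp w_1$ becomes $\sum_i d(i) y_i = 0$. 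Hence
\[
    1 - \lam_2(N) \;=\; \min_{\substack{y \neq 0 \\ \sum_i d(i) y_i = 0}} \frac{\sum_{ij \in E}(y_i - y_j)^2}{\sum_i d(i) y_i^2},
\]
and since the optimal additive shift in the denominator is exactly the $\pi$-weighted mean of $y$, this equals $\min_{y \text{ non-constant}}\; \sum_{ij \in E}(y_i - y_j)^2 \big/ \min_c \sum_i d(i)(y_i - c)^2$.

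Next, given any non-constant $y$, I would shift by its $\pi$-weighted median so that both $V^+ = \{i : y_i > 0\}$ and $V^- = \{i : y_i < 0\}$ have $\pi$-measure at most $\tfrac{1}{2}$. Writing $y = y_+ - y_-$ with $y_\pm = \max(\pm y, 0)$, an edge-by-edge check (for an edge crossing $V^+$ and $V^-$, $(y_i - y_j)^2 = (y_+(i) + y_-(j))^2 \ge y_+(i)^2 + y_-(j)^2$; other edges match exactly) gives
\[
    \sum_{ij \in E}(y_i - y_j)^2 \;\ge\; \sum_{ij \in E}(y_+(i) - y_+(j))^2 \,+\, \sum_{ij \in E}(y_-(i) - y_-(j))^2,
\]
while taking $c = 0$ in the denominator yields $\min_c \sum_i d(i)(y_i - c)^2 \le \sum_i d(i)\, y_+(i)^2 + \sum_i d(i)\, y_-(i)^2$. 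Since a weighted average of ratios is at least the smaller ratio, $1 - \lam_2(N) \ge \min\{R(y_+), R(y_-)\}$ where $R(f) := \sum_{ij \in E}(f(i) - f(j))^2 \big/ \sum_i d(i) f(i)^2$. It therefore suffices to show $R(f) \ge \Phi_G^2/8$ for every non-negative $f$ whose support has $\pi$-measure at most $\tfrac{1}{2}$.

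This final step is the classical Cheeger sweep. Writing $A = \sum_{ij \in E}(f(i) - f(j))^2$ and $B = \sum_i d(i) f(i)^2$, Cauchy--Schwarz gives
\[
    \sum_{ij \in E} |f(i)^2 - f(j)^2| \;\le\; \sqrt{\sum_{ij \in E}(f(i) - f(j))^2}\,\cdot\,\sqrt{\sum_{ij \in E}(f(i) + f(j))^2}\;\le\;\sqrt{A}\cdot\sqrt{2B}.
\]
For the left-hand side, the layer-cake identity yields $\sum_{ij \in E}|f(i)^2 - f(j)^2| = \int_0^\infty e(S_t, S_t^c)\, dt$ with $S_t := \{i : f(i)^2 > t\}$. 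Since $\pi(S_t) \le \tfrac{1}{2}$ (so $\pi(S_t^c) \ge \tfrac{1}{2}$), the definition of $\Phi_G$ gives $e(S_t, S_t^c) \ge \Phi_G \cdot 2m\, \pi(S_t)\pi(S_t^c) \ge \tfrac{\Phi_G}{2}\sum_{i \in S_t} d(i)$. Integrating and using $\int_0^\infty \sum_{i \in S_t} d(i)\, dt = \sum_i d(i) f(i)^2 = B$ gives $\tfrac{\Phi_G}{2} B \le \sqrt{2AB}$, and squaring yields $R(f) = A/B \ge \Phi_G^2/8$, as desired.

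The main subtlety is the reduction in the second paragraph: one cannot simply replace $y$ by $y_+$ (since $R(y_+) \le R(y)$ need not hold in general), so the ``$\min_c$'' version of the Rayleigh quotient, combined with the median shift that makes \emph{both} $\pi(V^+)$ and $\pi(V^-)$ at most $\tfrac{1}{2}$, is what lets the decomposition into $y_+$ and $y_-$ feed cleanly into the sweep argument.
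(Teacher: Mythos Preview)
Your proof is correct and is essentially the standard proof of the discrete Cheeger inequality (as in Lov\'asz's survey or Sinclair--Jerrum). Note, however, that the paper does not supply its own proof of this statement: it is quoted directly as Theorem~5.3 from \cite{lovasz1993random} and used as a black box, so there is no in-paper argument to compare against. Your write-up of the variational characterisation, the median shift and $y_\pm$ decomposition, and the sweep via the layer-cake identity is clean and matches the classical treatment; the only minor point worth flagging is the degenerate case where one of $y_+,y_-$ vanishes identically (so the mediant inequality is not needed and one passes directly to $R$ of the nonzero part), but this is harmless.
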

    
    In light of \Cref{thm:upperboundoneigenvalue} and \Cref{lem:mixing}, it will be useful to have a lower bound on $\Phi_G$ for a $(d, \eta, \eps)$-expander $G$. This is easy to achieve, as can be seen in the following lemma.
    
    \begin{lemma} \label{lem:large-phi} 
        Let $d\geq 1$, $\eta\in (0,1)$, $\varepsilon\in (0,\frac{1}{2}]$.
        Let $G$ be a $(d, \eta, \eps)$-expander on $n$ vertices. Then $\Phi_G \geq \frac{\eta}{3}$.
    \end{lemma}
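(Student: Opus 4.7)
The plan is to show $\Phi(S) \ge \eta/3$ for every nontrivial $S \subseteq V(G)$, by combining $d$-minimality and the expansion property with the trivial bound $\pi(S^c) \le 1$. The definition makes it immediate that $\Phi(S) = \Phi(S^c)$, since $e(S, S^c) = e(S^c, S)$ and $\pi(S)\pi(S^c)$ is symmetric in $S$ and $S^c$. Thus, by symmetry, I may assume $|S| \le n/2$. Because $\eps \le \frac{1}{2}$, this gives $|S| \le (1 - \eps)n$, which is exactly the hypothesis needed to apply both \Cref{obs:edge-expansion-new} and the defining inequality $d(S) \le (1 - \eta)d$.

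Feeding these two structural inputs into the formula for $\Phi(S)$, I get $e(S, S^c) \ge \frac{\eta d}{2}|S|$ on the one hand, and $2e(S) = d(S)|S| \le (1 - \eta)d|S|$ on the other. Using $2m\pi(S) = \sum_{v \in S} d(v) = 2e(S) + e(S, S^c)$ together with $\pi(S^c) \le \pi(V) = 1$, I can write
\begin{equation*}
\Phi(S) \;=\; \frac{e(S, S^c)}{2m\,\pi(S)\,\pi(S^c)} \;\ge\; \frac{e(S, S^c)}{2e(S) + e(S, S^c)} \;\ge\; \frac{e(S, S^c)}{(1 - \eta)d|S| + e(S, S^c)}.
\end{equation*}

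Finally, the rightmost expression is increasing in $e(S, S^c)$, so substituting the lower bound $e(S, S^c) \ge \frac{\eta d|S|}{2}$ yields
\begin{equation*}
\Phi(S) \;\ge\; \frac{\eta/2}{(1 - \eta) + \eta/2} \;=\; \frac{\eta/2}{1 - \eta/2} \;\ge\; \frac{\eta}{2} \;\ge\; \frac{\eta}{3},
\end{equation*}
where the penultimate inequality uses $1 - \eta/2 \le 1$. There is no real obstacle here; the only point that requires any care is the opening symmetry reduction, because without $|S| \le (1 - \eps)n$ neither the edge-expansion bound nor the density bound $d(S) \le (1 - \eta)d$ would be available, and both are essential to the estimate above.
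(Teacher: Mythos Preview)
Your proof is correct and follows essentially the same route as the paper: reduce by symmetry to $|S|\le n/2\le(1-\eps)n$, bound $2m\pi(S)=2e(S)+e(S,S^c)$ from above, use $\pi(S^c)\le 1$, and plug in the edge-expansion lower bound $e(S,S^c)\ge \frac{\eta d}{2}|S|$. The one minor difference is that the paper bounds $e(S)\le \frac{d}{2}|S|$ using only $d$-minimality, whereas you use the sharper expander bound $e(S)\le \frac{(1-\eta)d}{2}|S|$; this yields the slightly stronger intermediate estimate $\Phi(S)\ge \frac{\eta}{2-\eta}$ in place of the paper's $\frac{\eta}{2+\eta}$, but both exceed $\eta/3$.
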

    
    \begin{proof}
        Let $S\subseteq V(G)$. Since $\Phi(S)=\Phi(S^c)$, we may assume that
        $|S|\leq \frac{n}{2}\leq (1-\varepsilon) n$. Observation~\ref{obs:edge-expansion-new} thus implies  
        $e(S,S^c)\geq \frac{1}{2}\eta d|S|$.
        Let $\gamma$ be such that $e(S,S^c)=\gamma d|S|$, so $\gamma\geq \frac{\eta}{2}$. By definition of a $(d,\eta,\varepsilon)$-expander, $G$ is also $d$-minimal.
        Hence $e(G[S])\leq \frac{1}{2}d|S|$.
        Hence, $\sum_{v\in S} d(v)=2e(G[S])+e(S,S^c)\leq d|S|+\gamma d|S|$. 
        Also, observe that $\pi(S^c)\leq 1$. Hence
   
        \begin{align*}
            \Phi(S)& =\frac{e(S,S^c)}{2e(G)\pi(S)\pi(S^c)} 
            \geq \frac{e(S,S^c)}{\sum_{v\in S} d(v)}
            \geq \frac{\gamma d|S| }{d |S|+\gamma d|S|}\geq \frac{\gamma}{1+\gamma} \geq \frac{\eta}{\eta+2}\geq \frac{\eta}{3}.
        \end{align*}
        The above inequality thus implies $\Phi_G \ge \frac{\eta}{3}$.
    \end{proof}
    
	Recall that given a graph $G$ and two vertices $x,y$, the quantity $\homw(P_k^{xy})$ denotes the sum of weights of walks of length $k$ from $x$ to $y$. The following lemma and its immediate corollary will allow us to compare the values of $\homw(P_k^{xy})$, where $w(xy) = \frac{1}{\sqrt{d(x)d(y)}}$ for every edge $xy$, for different pairs of vertices $(x, y)$ in $G$. 
    
    \begin{lemma} \label{lem:mixing} 
        Let $d\geq 1$, $\eta\in (0,1)$, $\varepsilon\in (0,\frac{1}{2}]$.
        Let $G$ be a bipartite $(d, \eta, \eps)$-expander on $n$ vertices with bipartition $\{X, Y\}$. 
        Define a weighting $\omega : E(G) \to \R^{\ge 0}$,  such that for each edge $xy$ in $G$, $\omega(xy)= \frac{1}{\sqrt{d(x)d(y)}}$.        
        The following holds for any two vertices $x,y\in V(G)$
        and every integer $k\geq 1$.
        \begin{equation*}
            \left| \frac{\homw(P_k^{xy})\sqrt{d(y)}}{\sum_{z\in V(G)}{\homw(P_k^{xz})\sqrt{d(z)}}} - \frac{d(y)}{2e(G)}\left(1 + (-1)^{k + \one(x \in X) + \one(y \in X)}\right)\right| 
            \leq \sqrt{\frac{d(y)}{d(x)}}\left(1-\frac{\eta^2}{72}\right)^k.
        \end{equation*}
    \end{lemma}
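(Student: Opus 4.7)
The plan is to follow the spectral strategy used to prove \Cref{lem:markovchain}. First, combining \Cref{lem:large-phi} ($\Phi_G \ge \eta/3$) with \Cref{thm:upperboundoneigenvalue} gives $\lam_2(N(G)) \le 1 - \Phi_G^2/8 \le 1 - \eta^2/72$, so it suffices to prove the bound with $\lam_2(N(G))^k$ in place of $(1-\eta^2/72)^k$.

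Next, I would spectrally expand $\hom_{x,y}(P_k) = (A^k)_{x,y}$ and $T_k(x) := \sum_z (A^k)_{x,z}$ using the spectral decomposition $N = \sum_i \lam_i w_i w_i^T$, with the extreme eigenvectors $w_1 = \sqrt{d}/\sqrt{2e(G)}$ (eigenvalue $1$) and $w_n = \overline{w_1}$ (eigenvalue $-1$, valid since $G$ is bipartite). The relation $N^k = D^{1/2} A (DA)^{k-1} D^{1/2}$ appearing in the proof of \Cref{lem:markovchain} allows expressing quantities involving $A^k$ in terms of $N^k$, modulo intermediate $D$-factors.

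The contribution of $w_1$ and $w_n$ should yield the main term: a short computation, using the bipartite identity $\sum_{z \in X} d(z) = \sum_{z \in Y} d(z) = e(G)$ to cancel a cross term in $T_k(x)$, gives $\frac{d(y)}{2e(G)}\bigl(1 + (-1)^{k + \one(x \in X) + \one(y \in X)}\bigr)$. The error term comes from the remaining eigenvalues $\lam_2, \dots, \lam_{n-1}$: bounding their combined contribution by orthonormality and a Cauchy--Schwarz estimate, mirroring the bound $\sum_{i=2}^{n-1} |w_{i,x} w_{i,y}| \le 1$ used in the proof of \Cref{lem:markovchain}, yields an error of at most $\sqrt{d(y)/d(x)} \, \lam_2(N)^k$ after normalising by $T_k(x)$.

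The main obstacle is that, unlike $M^k = D^{1/2} N^k D^{-1/2}$, the matrix $A^k$ does not admit a clean spectral expression through $N^k$: iterating $A = D^{-1/2} N D^{-1/2}$ produces stray $D^{-1}$ factors between successive $N$'s. Handling these carefully---or alternatively rerouting through the clean identity $(AM^{k-1})_{x,y} = d(x) \cdot (M^k)_{x,y} = \sqrt{d(x) d(y)} \cdot (N^k)_{x,y}$ and using it together with the relation $\sum_{y} (AM^{k-1})_{x,y} = d(x)$---is the key technical step.
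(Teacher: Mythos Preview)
The paper's proof is three lines: it asserts that $\frac{\hom_{x,y}(P_k)}{\sum_z \hom_{x,z}(P_k)}$ equals $(M^k)_{x,y}$ (the probability that the simple random walk from $x$ is at $y$ after $k$ steps), invokes \Cref{lem:markovchain} as a black box to get the bound with $\lambda_2(N)^k$ on the right, and then combines \Cref{thm:upperboundoneigenvalue} with \Cref{lem:large-phi} to obtain $\lambda_2(N) \le 1 - \eta^2/72$, exactly as in your first paragraph. No spectral computation is redone.

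Your instinct that $A^k$ and $M^k$ are genuinely different objects is correct: the simple random walk does not sample length-$k$ walks uniformly in an irregular graph, so the identification $(M^k)_{x,y} = (A^k)_{x,y}\big/\sum_z (A^k)_{x,z}$ that the paper asserts is in fact false in general. But your proposal does not repair this. Your ``alternative'' route via $(AM^{k-1})_{x,y} = d(x)(M^k)_{x,y}$ and $\sum_y (AM^{k-1})_{x,y} = d(x)$, after normalising, simply recovers $(M^k)_{x,y}$ --- the paper's quantity --- and says nothing about the walk-count ratio $(A^k)_{x,y}\big/\sum_z (A^k)_{x,z}$ that actually appears in the lemma. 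And the direct spectral expansion of $A^k$ you sketch is left at ``handling these carefully'', with no mechanism offered for controlling the interleaved $D^{-1}$ factors or for extracting the stated main term and Cauchy--Schwarz error from such an expansion. So as written, your proposal either collapses to the paper's short argument (inheriting the same issue) or leaves the real difficulty open.
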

    
    \begin{proof}
        Let $M=M(G)$, $N = N(G)$ and let $\lam_2$ be the second largest eigenvalue of $N$. Let $x$ be any vertex in $G$.
        Let $\mathbb{W}_{k}^x$ be a random walk of length $k$ starting at $x$.  For any walk $P=x_0\ldots x_k$ in $G$, where $x_0=x$,
		\begin{equation*}
			\Pr[\mathbb{W}_{k}^x=P]=\frac{1}{d(x_0)}\cdot \ldots \cdot \frac{1}{d(x_{k-1})}.
		\end{equation*}
		Note that
		\begin{align*}
			\omega(P)
			& = \frac{1}{\sqrt{d(x_0)d(x_1)}} \cdot \ldots \cdot \frac{1}{\sqrt{d(x_{k-1})d(x_k)}} \\
			& = \frac{\sqrt{d(x_0)}}{\sqrt{d(x_k)}}\cdot \frac{1}{d(x_0) \cdot \ldots \cdot d(x_{k-1})}
			=\frac{\sqrt{d(x_0)}}{\sqrt{d(x_k)}}\cdot \Pr[\mathbb{W}_{k}^x=P]. 
		\end{align*}
         
		For any vertex $y$ in $G$, by definition,
		\def \bW {\mathbb{W}}
		\begin{align*}
			\Pr[\mathbb{W}_k^x \text{ ends at $y$}]
			= \sum_{P:=xx_1\ldots x_{k-1}y}\Pr(\bW_k^x = P)
			=\frac{\sqrt{d(y)}}{\sqrt{d(x)}}\sum_{P:P=xx_1\ldots x_{k-1}y}\omega(P) 
			=\homw(P_k^{xy})\cdot\sqrt{\frac{d(y)}{d(x)}}. 
		\end{align*}
        On the other hand, notice that
		\begin{equation*}
			1=\sum_{z\in V(G)}{ \Pr[\mathbb{W}_k^x \textit{ ends at } z]}=\sum_{z\in V(G)}{\homw(P_k^{xz})\cdot\sqrt{\frac{d(z)}{d(x)}}}.
		\end{equation*}
        
        Recall from our discussion before Definition~\ref{def19}, that the probability that a random walk of length $k$ starting at $x$ ends at $y$ is exactly $(M^k)_{x,y}$, thus
		\begin{equation*}
			(M^k)_{x,y} 
			=\Pr[\mathbb{W}_k^x \textit{ ends at } y]
			=\frac{\homw(P^{xy}_k)\sqrt{\frac{d(y)}{d(x)}}}{\sum_{z\in V(G)}{\homw(P_k^{xz})\sqrt{\frac{d(z)}{d(x)}}}}
			=\frac{\homw(P^{xy}_k)\sqrt{d(y)}}{\sum_{z\in V(G)}{\homw(P^{xz}_k)\sqrt{d(z)}}}.
		\end{equation*}
        By Lemma~\ref{lem:markovchain}, 
        \begin{equation}
        \label{eqlem18}
             \left| \frac{\sqrt{d(y)}\homw(P^{xy}_k)}{\sum_{z\in V(G)}{\sqrt{d(z)}\homw(P^{xz}_k)}}-\frac{d(y)}{2e(G)}\left(1 + (-1)^{k + \one(x \in X) + \one(y \in X)}\right)\right| 
            \leq \sqrt{\frac{d(y)}{d(x)}} \cdot (\lam_2)^k.
        \end{equation}
        \Cref{thm:upperboundoneigenvalue} gives that $\lam_2 \le 1 - \frac{\Phi_G^2}{8}$, and by \Cref{lem:large-phi} we have $\Phi_G \ge \frac{\eta}{3}$. It follows that $\lam_2 \leq 1-\frac{\eta^2}{72}$. 
        Combining this inequality with \eqref{eqlem18}, the lemma follows.
    \end{proof}
    
	\begin{corollary} \label{cor:mixing} 
		Let $d\geq 1$, $\eta\in (0,1)$, $\varepsilon\in (0,\frac{1}{2}]$. Let $G$ be a bipartite $(d, \eta, \eps)$-expander on $n$ vertices with a bipartition $\{X, Y\}$ and with a given weighting $\omega : E(G) \to \R^{> 0}$ such that for each edge $xy$ in $G$, $\omega(xy)= \frac{1}{\sqrt{d(x)d(y)}}$. The following holds for any two vertices $x,y\in X$
		and even integer $k\geq 2$.
		\begin{equation*}
			\left|\frac{\homw(P^{xy}_k)\sqrt{d(y)}}{\sum_{z\in X}{\homw(P^{xz}_k)\sqrt{d(z)}}} - \frac{d(y)}{e(G)}\right| 
			\leq \sqrt{n}\left(1-\frac{\eta^2}{72}\right)^k.
		\end{equation*}
	\end{corollary}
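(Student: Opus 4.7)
The plan is to derive the corollary as a direct specialisation of Lemma~\ref{lem:mixing}, by simplifying both the main term and the error term under the additional hypotheses that $x, y \in X$ and $k$ is even.

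First I would apply Lemma~\ref{lem:mixing} to the pair $x, y$ and the integer $k$. Since $x \in X$ and $y \in X$, both indicator functions $\one(x \in X)$ and $\one(y \in X)$ equal $1$, so the exponent becomes $k + 2$, which is even since $k$ is even. Hence $(-1)^{k + \one(x \in X) + \one(y \in X)} = 1$, so the bracketed factor simplifies as
\begin{equation*}
    \frac{d(y)}{2e(G)}\bigl(1 + (-1)^{k + \one(x \in X) + \one(y \in X)}\bigr)
    = \frac{d(y)}{2e(G)} \cdot 2
    = \frac{d(y)}{e(G)},
\end{equation*}
which matches the main term in the corollary.

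Next I would bound the error term $\sqrt{d(y)/d(x)}$ by $\sqrt{n}$. Since $G$ is a $(d, \eta, \eps)$-expander, it is $d$-minimal, so by Observation~\ref{obs:d-expand} we have $\delta(G) \ge d/2 \ge 1/2$, and since degrees are integers this gives $d(x) \ge 1$. On the other hand $d(y) \le n - 1 \le n$ trivially. Therefore $\sqrt{d(y)/d(x)} \le \sqrt{n}$, and the inequality from Lemma~\ref{lem:mixing} becomes exactly the one claimed.

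There is no real obstacle here — the corollary is simply the clean form of Lemma~\ref{lem:mixing} after plugging in the parity hypotheses and a trivial degree bound. The only thing to double-check is the sign of the bracketed factor under the stated hypotheses, which we verified above.
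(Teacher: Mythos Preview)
Your proposal is correct and matches the paper's intended derivation: the corollary is stated immediately after Lemma~\ref{lem:mixing} without a separate proof, precisely because it follows by the specialisation you describe (evaluating the sign factor when $x,y\in X$ and $k$ is even, and using the crude bound $\sqrt{d(y)/d(x)}\le\sqrt{n}$ coming from $\delta(G)\ge 1$ and $d(y)\le n$).
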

    \begin{proof} 
		Let $x,y\in X$ be given. 
		Since $k$ is even, for each $z\in Y$, $\Hom(P_k^{xz})=\emptyset$. The claim follows by applying Lemma~\ref{lem:mixing} and 
		the fact that $\sqrt{\frac{d(y)}{d(x)}}\leq \sqrt{n}$.
    \end{proof}
    
	The next lemma contains the main takeaway from our discussion about random walks in almost regular bipartite expanders. It tells us that for relatively large $k$, and for the weighting defined by $w(xy) = \frac{1}{\sqrt{d(x)d(y)}}$, the values of $\homw(C_{2k}^{xy})$ do not differ by much over the range of pairs $(x, y)$ where $x$ and $y$ are in the largest part of the bipartition.

    \begin{lemma} \label{lem:rhomin-rhomax}
        Let $n, d, \mu, k \ge 1$ and $\eta \in (0, 1)$, $\varepsilon\in (0,\frac{1}{2}]$.
        Suppose that $k$ is an even integer satisfying $k \ge \frac{2^9 \log n}{\eta^2}$.
        Let $G$ be a bipartite $(d, \eta, \eps)$-expander with maximum degree at most $\mu d$ and with a given weighting $\omega : E(G) \to \R^{> 0}$ such that for each edge $xy$ in $G$, $\omega(xy)= \frac{1}{\sqrt{d(x)d(y)}}$. Let $\{X, Y\}$ be the bipartition of $G$ and suppose that $|X| \ge \frac{n}{2}$. 
        Let 
        \begin{align*}
            \rhomin 
            & = \min \{ \homw(C_{2k}^{xy}) \,:\, x, y \in X \} \\
            \rhomax
            & = \max \{ \homw(C_{2k}^{xy}) \,:\, x, y \in X \}.
        \end{align*}
        Then $\rhomax \le 2^{10} \mu^2 \cdot \rhomin$.
    \end{lemma}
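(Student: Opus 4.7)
The plan is to apply Corollary~\ref{cor:mixing} to approximate $\hom_{x,y}(P_k)$ accurately for all $x,y \in X$, and then combine this with the identity $\hom_{x,y}(C_{2k}) = \hom_{x,y}(P_k)^2$ from \eqref{eqn:hom-cycle-paths}. Writing $W_x := \sum_{z \in V(G)} \hom_{x,z}(P_k)$ for the total number of length-$k$ walks starting at $x$, the corollary states that for $x, y \in X$ and even $k$,
\begin{equation*}
    \bigg|\hom_{x,y}(P_k) - \frac{d(y)}{e(G)}\,W_x\bigg| \le \sqrt{n}\bigg(1 - \frac{\eta^2}{72}\bigg)^{k} \cdot W_x.
\end{equation*}
Under the hypothesis $k \ge 2^9 \log n /\eta^2$, a direct calculation shows $\bigl(1-\tfrac{\eta^2}{72}\bigr)^{k} \le e^{-k\eta^2/72} \le n^{-10}$, so the error is at most $n^{-9.5}\,W_x$. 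On the other hand, $\delta(G) \ge d/2$ (Observation~\ref{obs:d-expand}) and $e(G) \le n\mu d/2$ give $\frac{d(y)}{e(G)} \ge \frac{1}{n\mu}$, which vastly dominates $n^{-9.5}$ since $\mu \le \Delta(G)/(d/2)$ is polynomially bounded in $n$. Hence, for all $x, y \in X$,
\begin{equation*}
    \hom_{x,y}(P_k) = (1+o(1)) \cdot \frac{d(y)}{e(G)} \cdot W_x.
\end{equation*}

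Next I would exploit the reversibility identity $\hom_{x,y}(P_k) = \hom_{y,x}(P_k)$ (reversing a walk is a bijection) to compare $W_x$ with $W_{x'}$ for any $x, x' \in X$. Applying the approximation above once to $\hom_{x, x'}(P_k)$ and once to $\hom_{x', x}(P_k)$, and equating the two, yields
\begin{equation*}
    \frac{W_x}{W_{x'}} = (1+o(1)) \cdot \frac{d(x)}{d(x')} \le (1+o(1)) \cdot 2\mu,
\end{equation*}
using the bounds $d/2 \le d(v) \le \mu d$ for every $v \in V(G)$.

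Finally I would combine everything via $\hom_{x,y}(C_{2k}) = \hom_{x,y}(P_k)^2$: for any pairs $(x^*, y^*), (x_*, y_*) \in X \times X$ achieving $\rhomax$ and $\rhomin$ respectively,
\begin{equation*}
    \frac{\rhomax}{\rhomin} \le (1+o(1)) \cdot \bigg(\frac{d(y^*)}{d(y_*)}\bigg)^2 \bigg(\frac{W_{x^*}}{W_{x_*}}\bigg)^2 \le (1+o(1)) \cdot (2\mu)^2 \cdot (2\mu)^2 = (1+o(1)) \cdot 16\mu^4,
\end{equation*}
which is at most $2^{12}\mu^4$ for $n$ sufficiently large (there is a slack factor of $256$ to absorb the $(1+o(1))$). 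The main challenge is bookkeeping the $(1+o(1))$ factors cleanly; this is straightforward here because the relative error from Corollary~\ref{cor:mixing} is at most $n^{-8}$ or so under the chosen value of $k$, which is more than enough.
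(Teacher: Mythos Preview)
Your proposal is correct and follows essentially the same route as the paper: apply Corollary~\ref{cor:mixing} to pin down $\hom_{x,y}(P_k)/W_x$ up to a constant factor, use the reversal symmetry $\hom_{x,y}(P_k)=\hom_{y,x}(P_k)$ to compare $W_x$ across different starting points, and then square via \eqref{eqn:hom-cycle-paths}. One small wrinkle: your sentence ``$\mu \le \Delta(G)/(d/2)$ is polynomially bounded in $n$'' is not justified, since $\mu$ is a free parameter in the hypothesis, not determined by $G$; simply bound $e(G)\le n^2/2$ (so $d(y)/e(G)\ge 1/n^2$) instead of $e(G)\le n\mu d/2$, and the comparison with the $n^{-9.5}$ error goes through uniformly, removing the need for ``$n$ sufficiently large'' as well.
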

    
    \begin{proof}
        By \Cref{cor:mixing}, for every $x, y \in X$ we have 
        \begin{align*}
            \left| \frac{\homw(P_k^{xy}) \sqrt{d(y)}}{\sum_{z \in X}\homw(P^{xz}_k)\sqrt{d(z)}} - \frac{d(y)}{e(G)} \right| 
            & \le \sqrt{n}\left(1 - \frac{\eta^2}{72}\right)^k \\
            & \le \sqrt{n} \cdot \exp\left(-\frac{k\eta^2}{72}\right)
            \le \sqrt{n} \cdot \exp(-4 \log n) 
            \le \frac{1}{n^3}
            \le \frac{d(y)}{2e(G)}.
        \end{align*}
        It follows that
        \begin{equation*}
            \frac{d(y)}{2e(G)}
            \le \frac{\homw(P_k^{xy})\sqrt{d(y)}}{\sum_{z \in X}\homw(P^{xz}_k)\sqrt{d(z)}}
            \le \frac{2d(y)}{e(G)}.
        \end{equation*}

        Writing the same for $x, w\in X$ we obtain 
        \begin{equation*}
            \frac{d(w)}{2e(G)}
            \le \frac{\homw(P_k^{xw})\sqrt{d(w)}}{\sum_{z \in X}\homw(P_k^{xz})\sqrt{d(z)}}
            \le \frac{2d(w)}{e(G)}.
        \end{equation*}
        
        Hence, every $x, y, w \in X$ satisfy 
		\begin{equation*}
			\frac{\homw(P^{xy}_k)}{\homw(P^{xw}_k)} 
			= \frac{\homw(P^{xy}_k)\sqrt{d(y)}}{\homw(P^{xw}_k)\sqrt{d(w)}} \cdot \sqrt{\frac{d(w)}{d(y)}}
			\le \frac{4d(y)}{d(w)} \cdot \sqrt{\frac{d(w)}{d(y)}}
			= 4\sqrt{\frac{d(y)}{d(w)}}
			\le 4\sqrt{2\mu},
		\end{equation*}
        where the second inequality follows from the fact that the minimum degree of $G$ is at least $\frac{d}{2}$ (by Observation~\ref{obs:d-expand}) and the maximum degree is at most $\mu d$. 
        
        Observe that $\homw(P_k^{xy}) = \homw(P_k^{yx})$ for $x, y \in X$. It follows that, for every $x, y, z, w \in X$,
        \begin{equation*}
            \frac{\homw(P_k^{xy})}{\homw(P_k^{zw})}
            = \frac{\homw(P_k^{xy})}{\homw(P_k^{xz})} \cdot \frac{\homw(P_k^{xz})}{\homw(P_k^{zw})} 
            \le 32 \mu.
        \end{equation*}
        Let $x, y, z, w \in X$ satisfy $\rho_{\max} = \homw(C_{2k}^{xy})$ and $\rho_{\min} = \homw(C_{2k}^{zw})$.
        Then 
        \begin{equation*}
            \frac{\rhomax}{\rhomin}
            = \frac{\homw(C_{2k}^{xy})}{\homw(C_{2k}^{zw})}
            = \left(\frac{\homw(P_k^{xy})}{\homw(P_k^{zw})}\right)^2
            \le 2^{10} \mu^2,
        \end{equation*}
        proving the lemma.
    \end{proof}
    
    Recall that, by \Cref{lem:theta}, pairs of vertices $(x, y)$ for which $\homwstar(C_{2k}^{xy})$ is considerably smaller than $\homw(C_{2k}^{xy})$ can be used to build many colour-disjoint  and internally vertex-disjoint rainbow paths. 
    The following lemma shows that for large enough $k$ and $d$, almost all pairs of vertices in one of the parts of an almost regular bipartite expander satisfy this property.

    \begin{lemma} \label{lem:few-bad-pairs}
        Let $n, d, \mu, k, s, p\ge 1$ and $\eta \in (0, 1)$, $\varepsilon\in (0,\frac{1}{2}]$.
        Suppose that $k$ is even and satisfies $k \ge \frac{2^9  \log n}{\eta^2}$ and that $d \ge 2^{38} k^3 \mu^7 s^4 p^2 n^{1/k}$.
        Let $G$ be a bipartite $(d, \eta, \varepsilon)$-expander on $n$ vertices with maximum degree at most $\mu d$ and with a given weighting $\omega : E(G) \to \R^{> 0}$ such that for each edge $xy$ in $G$, $\omega(xy)= \frac{1}{\sqrt{d(x)d(y)}}$. Let $\{X, Y\}$ be the bipartition of $G$ and suppose that $|X| \ge \frac{n}{2}$.
        Then for all but at most $\frac{n^2}{p}$ pairs $(x, y)$ with $x,y \in X$ the following holds.   
        \begin{equation*}
            \homwstar(C_{2k}^{xy}) \le \frac{1}{s^2} \homw(C_{2k}^{xy}).
        \end{equation*}
    \end{lemma}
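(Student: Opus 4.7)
The plan is to combine two ingredients: Lemma~\ref{lem:non-rainbow-hom}, which bounds $\homstar(C_{2k})$ as a small proportion of $\hom(C_{2k})$, and Lemma~\ref{lem:rhomin-rhomax}, which says that over pairs in $X \times X$ the values $\hom_{x,y}(C_{2k})$ all lie within a factor $2^{12}\mu^4$ of each other. A Markov-type argument then shows that only a small number of pairs can violate the desired ratio, because each such pair contributes at least $\rhomin/s^2$ to $\homstar(C_{2k})$, while the total $\homstar(C_{2k})$ is small compared to $n^2 \rhomin$.

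First I would set $S = 2^{13} s^2 \mu^4 p$ and apply Lemma~\ref{lem:non-rainbow-hom}. The hypothesis $d \ge 2^{40} k^3 \mu^9 s^4 p^2 n^{1/k}$ is precisely calibrated so that $d \ge 2^{14} k^3 S^2 \mu n^{1/k}$, while $\delta(G) \ge d/2$ follows from Observation~\ref{obs:d-expand} and the maximum degree bound is given. This yields $\homstar(C_{2k}) \le \frac{1}{S}\hom(C_{2k})$. Next I would invoke Lemma~\ref{lem:rhomin-rhomax} to get $\rhomax \le 2^{12}\mu^4 \rhomin$. To convert this to a bound on $\hom(C_{2k})$, note that since $G$ is bipartite and $k$ is even, every closed $2k$-walk has its start and its midpoint in the same part, so
\begin{equation*}
\hom(C_{2k}) \;=\; \sum_{x,y \in X}\hom_{x,y}(C_{2k}) + \sum_{x,y \in Y}\hom_{x,y}(C_{2k}).
\end{equation*}
Cyclic shifting each closed walk by one step is a bijection between rooted closed $2k$-walks starting in $X$ and those starting in $Y$, so the two sums are equal. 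Hence
\begin{equation*}
\hom(C_{2k}) \;=\; 2\sum_{x,y \in X}\hom_{x,y}(C_{2k}) \;\le\; 2|X|^2 \rhomax \;\le\; 2^{13}\mu^4 n^2 \rhomin.
\end{equation*}

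Finally, call a pair $(x,y) \in X \times X$ \emph{bad} if $\homstar_{x,y}(C_{2k}) > \frac{1}{s^2}\hom_{x,y}(C_{2k})$; each such pair contributes more than $\rhomin/s^2$ to $\homstar(C_{2k})$. Writing $B$ for the number of bad pairs, the two estimates above combine to
\begin{equation*}
B \cdot \frac{\rhomin}{s^2} \;<\; \homstar(C_{2k}) \;\le\; \frac{\hom(C_{2k})}{S} \;\le\; \frac{2^{13}\mu^4 n^2 \rhomin}{S},
\end{equation*}
and substituting $S = 2^{13}s^2\mu^4 p$ gives $B < n^2/p$, as desired. The only real conceptual step is the transfer from the global quantity $\hom(C_{2k})$ to the part-local minimum $\rhomin$ via the bipartite parity observation and Lemma~\ref{lem:rhomin-rhomax}; everything else is constant bookkeeping chosen to match the hypotheses on $d$ and $k$.
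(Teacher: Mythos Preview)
Your proposal is correct and follows essentially the same approach as the paper: set $S=2^{13}\mu^4 s^2 p$, apply Lemma~\ref{lem:non-rainbow-hom} and Lemma~\ref{lem:rhomin-rhomax}, use the bipartite parity identity $\hom(C_{2k})=2\sum_{x,y\in X}\hom_{x,y}(C_{2k})$, and finish with the Markov-type count on bad pairs. The only differences are cosmetic ordering (you first convert $\hom(C_{2k})$ to a multiple of $n^2\rhomin$ and then count bad pairs, while the paper keeps $\rhomax$ in the intermediate bound and divides at the end) and that you spell out the cyclic-shift bijection and the use of Observation~\ref{obs:d-expand} explicitly.
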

    
    \begin{proof}
        Let $S = 2^{11} \mu^2 s^2 p$, and let $\rhomin$ and $\rhomax$ be defined as in the statement of \Cref{lem:rhomin-rhomax}. Then, by the same lemma we have $\rhomax \le 2^{10} \mu^2 \rhomin$.
        Let $A$ be the collection of (ordered) pairs $(x, y)$ with $x, y \in X$ that satisfy  $\homwstar(C_{2k}^{xy}) \ge \frac{1}{s^2}\homw(C_{2k}^{xy})$.
        Then
        \begin{align*}
            \homwstar(C_{2k}) 
            \ge \sum_{(x, y) \in A} \homwstar(C_{2k}^{xy}) 
            \ge \frac{1}{s^2} \sum_{(x, y) \in A} \homw(C_{2k}^{xy})
            \ge \frac{|A| \cdot \rhomin}{s^2}.
        \end{align*}
      
        Note that $\sum_{x, y \in X} \homw(C_{2k}^{xy}) = \sum_{x,y \in Y} \homw(C_{2k}^{xy})$, and so $\homw(C_{2k}) = 2\sum_{x, y \in X} \hom(C_{2k}^{xy})$.
        Hence, by \Cref{lem:non-rainbow-hom} (which is applicable since $d \ge 2^{16} \mu^3 k^3 S^2 n^{1/k}$), 
        \begin{equation*}
            \homwstar(C_{2k}) 
            \le \frac{1}{S} \cdot \homw(C_{2k})
            = \frac{2}{S} \sum_{x, y \in X} \homw(C_{2k}^{xy}) 
            \le \frac{2n^2 \cdot \rhomax}{S}.
        \end{equation*}
        Combining the lower and upper bounds on $\homwstar(C_{2k})$, we obtain the required inequality, as follows.
        \begin{equation*}
            |A|
            \le \frac{2n^2 \cdot s^2 \cdot \rhomax}{S \cdot \rhomin}
            \le \frac{n^2 \cdot 2^{11} \mu^2 s^2}{S} 
            = \frac{n^2}{p},
            \qedhere
        \end{equation*}
        as desired.
    \end{proof}

\section{Rainbow paths and subdivisions in expanders} \label{sec:main-proof}    

    We now prove our first main result about rainbow subdivisions.
    
    \begin{proof}[Proof of \Cref{thm:main}.]  
     Let $G'$ be a bipartite subgraph of $G$ with at least $\frac{e(G)}{2}$ edges. Then $d(G') \ge (\log n)^{53}$.  We apply \Cref{lem:bounded-max-deg-expander} to obtain a subgraph $H$ of $G'$ with the following properties.
        \begin{enumerate}
            \item 
                $H$ is a $(d, \eta, \varepsilon)$-expander on $n'$ vertices, where $d \ge \frac{d(G')}{2500(\log n)^2} \ge (\log n)^{50}$, $\eps=\frac{1}{2}$ and $\eta = \frac{\eps}{100(\log n')^2} = \frac{1}{200(\log n')^2}$,
            \item
                $H$ has maximum degree at most $\mu d$, where $\mu = 2500(\log n')^2$.
        \end{enumerate}
        
        Denote the bipartition of $H$ by $\{X, Y\}$, and suppose that $|X| \ge \frac{n'}{2}$. 
        Next, we aim to apply Lemma~\ref{lem:few-bad-pairs} to $H$. With that in mind, we
        let $k$ be the smallest even integer which is at least $\frac{2^9 \log n'}{\eta^2}$, so $k = \Theta\!\left((\log n')^{5}\right)$. 
        Let $s=2\binom{m}{2} k$ and $p = 8m$. 
        Let $A$ be the set of pairs $(x, y)$ with $x, y \in X$ that satisfy $\homwstar(C_{2k}^{xy}) > \frac{1}{s^2}\homw(C_{2k}^{xy})$.
        Note that $2^{38} k^3 \mu^7 s^4 p^2 (n')^{1/k} = \Theta\!\left((\log n')^{49}\right) < d$ and so, by \Cref{lem:few-bad-pairs}, we have $|A| \le \frac{(n')^2}{p} =\frac{(n')^2}{8m}< \frac{1}{m-1}\binom{|X|}{2}$,
        noting that $n'$ is sufficiently large.
            
        Hence by Tur\'an's theorem, there is a subset $Z \subseteq X$ of size $m$, such that $(x, y) \notin A$ for every distinct $x, y \in Z$. 
        By \Cref{lem:theta}, for any two vertices $x$ and $y$ in $Z$, there exist $s$ many pairwise colour-disjoint and internally vertex-disjoint rainbow paths of length $k$ from $x$ to $y$. By the choice of $s$, one can find greedily $\binom{m}{2}$ many paths of length $k$, which are pairwise colour-disjoint and internally  vertex-disjoint, and are internally vertex-disjoint from $Z$, each of which connecting a different pair of vertices in $Z$. This gives us the desired rainbow subdivision of $K_m$.
    \end{proof}

    Given a set of $m$ vertices in a graph $G$, a \emph{$K_m$-subdivision rooted at $Z$} is a subgraph consisting of $\binom{m}{2}$ paths, each joining a different pair of distinct vertices in $Z$, whose interiors are pairwise vertex-disjoint and disjoint from $Z$. By slightly adapting the proof of
    \Cref{thm:main}, one can show that any bipartite $(d, \eta, \eps)$-expander $G$ (with suitable parameters) contains a rainbow $K_m$-subdivision rooted at $Z$ for almost all the $m$-sets $Z$ in $V(G)$ (not just in $X$). 
     By using some additional tools, we next show that in a bipartite $(d,\eta,\eps)$-expander with suitable parameters, in fact, one can find  a rainbow $K_m$-subdivision rooted at $Z$ for every $m$-set $Z$ in $V(G)$.

    \begin{theorem} \label{thm:main-rooted} 
        Let $n, L, d, \mu, m \geq 2$, $\eta\in (0,1)$ and $\eps\in (0,\frac{1}{8}]$, and suppose that $L = \frac{2^{10} \log n}{\eta^2}$ and $d \ge \frac{2^{122} m^8 \mu^7 (\log n)^7}{\eta^{14}}$. Let $G$ be a bipartite $(d, \eta, \eps)$-expander with maximum degree at most $\mu d$, and let $Z$ be a set of $m$ vertices in $G$. Then there is  a rainbow $K_m$-subdivision, rooted at $Z$, where every edge is subdivided at most $L$ times.
    \end{theorem}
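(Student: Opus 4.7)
The plan is to build the rooted $K_m$-subdivision greedily: fix an ordering of the $\binom{m}{2}$ pairs in $Z$, and at each step produce a rainbow path of length at most $L$ joining the current pair whose internal vertices and colours avoid a running forbidden set $F$, then add those vertices and colours to $F$. Initially $F = Z$, and at every stage $|F| \le m + 2L \binom{m}{2} = O(L m^2)$. The heart of the proof is therefore the following subroutine: given $x, y \in V(G)$ and any set $F$ of vertices and colours with $|F| = O(Lm^2)$, find a rainbow $x, y$-path of length at most $L$ avoiding $F$.

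To execute the subroutine, I would set $\ell = 4 \log n/\eta$ and let $k$ be the smallest even integer at least $2^9 \log n/\eta^2$, so that $2(\ell+1) + k \le L$. Apply \Cref{lem:reachable-robust-new} from $x$, with forbidden set $F$ and $q$ of order $\log n/\eta$, to obtain a set $U_x \subseteq V(G)$ of size at least $(1-\eps)n$ together with rainbow paths $P(u)$ of length at most $\ell + 1$ from $x$ to $u$, each avoiding $F$, no colour appearing on more than $n/q$ of them. Apply the lemma again from $y$ to obtain $U_y$ and analogous paths $Q(v)$. Let $X$ be the larger side of the bipartition of $G$ and set $U = U_x \cap U_y \cap X$; since $\eps \le \tfrac{1}{8}$ and $|X| \ge n/2$, we have $|U| \ge n/4$.

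Now combine two counting estimates on $U \times U$. For each colour $c$ let $a_c = |\{u \in U : c \in P(u)\}|$ and $b_c = |\{v \in U : c \in Q(v)\}|$; the robust property gives $a_c, b_c \le n/q$, while $\sum_c b_c \le (\ell+1)|U|$, so the number of pairs $(u,v)$ with $P(u)$ and $Q(v)$ sharing a colour is at most $(n/q) \cdot (\ell+1) n$, which is $\ll |U|^2$ for our choice of $q$. Independently, applying \Cref{lem:few-bad-pairs} with $s$ of order $L m^2$ and a sufficiently large constant $p$ shows that all but at most $n^2 / p \le |U|^2/3$ of the pairs $(u,v)$ with $u, v \in X$ are \emph{good}, meaning $\homstar_{u,v}(C_{2k}) \le s^{-2} \hom_{u,v}(C_{2k})$. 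Putting the two estimates together, there exists a pair $(u,v) \in U \times U$ that is simultaneously good and has $P(u), Q(v)$ colour-disjoint.

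For this good pair, \Cref{lem:theta} produces $s$ pairwise colour-disjoint and internally vertex-disjoint rainbow $u,v$-paths of length $k$. Since each vertex or colour from $F$, from the interiors of $P(u), Q(v)$, or from the colours of $P(u), Q(v)$, lies in at most one of these $s$ paths, and the total number of objects to avoid is $O(L m^2)$, taking $s$ a constant factor larger than this bound yields a rainbow path $R$ avoiding all of them. Concatenating $P(u), R$ and the reverse of $Q(v)$ gives a rainbow $x, y$-walk of length at most $2(\ell+1) + k \le L$; since every edge has a distinct colour, shortcutting the walk produces the desired rainbow $x, y$-path and completes the induction. The main obstacle in the write-up is parameter bookkeeping: one must verify that a single choice of $k, \ell, s, p, q$ meets the hypotheses of \Cref{lem:reachable-robust-new}, \Cref{lem:few-bad-pairs} and \Cref{lem:theta} simultaneously. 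The tightest requirement comes from \Cref{lem:few-bad-pairs}, which forces $d \gtrsim k^3 \mu^9 s^4 p^2 n^{1/k}$, and substituting $k \approx \log n / \eta^2$ and $s \approx m^2 \log n/\eta^2$ produces exactly the assumed lower bound $d \ge 2^{135} m^8 \mu^9 (\log n)^7 / \eta^{14}$.
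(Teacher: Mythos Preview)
Your proposal is correct and is essentially the same proof as in the paper: the greedy outer loop, the subroutine that combines \Cref{lem:reachable-robust-new} from each endpoint, the colour-bad pair count, \Cref{lem:few-bad-pairs} on the larger side $X$, and \Cref{lem:theta} to select a middle path all match the paper's argument (Claim~\ref{claim:rainbow-connect} and its use). The only cosmetic differences are that the paper intersects with $X$ after forming $U_x\cap U_y$ and fixes concrete values $q=256\ell$, $p=64$, $s=|M|+2(\ell+1)+1$, whereas you leave these as orders of magnitude; your parameter-bookkeeping paragraph correctly identifies \Cref{lem:few-bad-pairs} as the binding constraint.
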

    
    \begin{proof}
        Let $\ell = \frac{4\log n}{\eta}$ and let $k$ be the smallest even integer satisfying $k \ge \frac{2^9 \log n}{\eta^2}$. One can check that $L \ge 2(\ell + 1) + k$.
        
        \begin{claim} \label{claim:rainbow-connect}
        Let $M$ be any set of colours and vertices such that $|M|\leq 2\binom{m}{2}(L+1) $. Let $x,y$ be any two vertices in $G$. There exists a rainbow $x,y$-path of length at most $L$ in $G$ that avoids $M$.    
        \end{claim}
        \begin{proof}[Proof of Claim~\ref{claim:rainbow-connect}]

        Let $q = 256\ell$. 
        By \Cref{lem:reachable-robust-new} (using $d \ge \frac{20q\ell + 8[2\binom{m}{2}(L+1)]}{\eta}$), there exists a subset $U_{x} \subseteq V(G)$ of size at least $(1-\eps)n$ and a collection of paths $\mathcal P = \{P(u) : u \in U_x \}$, where for each $u \in U_x$ the path $P(u)$ is a rainbow path from $x$ to $u$ of length at most $\ell+1$ that avoids $M$, and no colour appears in more than $\frac{n}{q}$ of the paths in $\mathcal P$. Similarly, there exists a subset $U_{y} \subseteq V(G)$ of size at least $(1-\eps)n$ and a collection of paths $\mathcal Q = \{Q(u) : u \in U_y\}$ where for each $u \in U_y$ the path $Q(u)$ is a rainbow path from $y$ to $u$ of length at most $\ell+1$ that avoids $M$, and no colour appears in  more than $\frac{n}{q}$ of the paths in $\mathcal Q$. Write $U = U_x \cap U_y$; then $|U| \ge (1 - 2\eps)n \ge \frac{3n}{4}$.
        
        We call an ordered pair $(u,v)$ with $u, v\in U$ \emph{colour-bad} if there is a colour that appears on both paths $P(u)$ and $Q(v)$. We next show that the number of colour-bad pairs in $U$ is small compared to the number of all pairs. Indeed, let $H$ be the auxiliary graph on the vertex set $U$ where $uv$ is an edge whenever at least one of $(u, v)$ and $(v, u)$ is colour-bad.
        Note that $d_H(u)\leq \frac{2(\ell+1)n}{q}$, for every $u \in U$, since $P(u)$ has length at most $\ell+1$ and any colour on $P(u)$ appears at most $\frac{n}{q}$ times in the collection $\mathcal{Q}$ (and similarly with the roles of $P$ and $Q$ reversed). Thus, $e(H) \leq \frac{(\ell+1)n}{q}|U| \le \frac{2\ell n}{q}|U| \leq \frac{n^2}{128}$.
        
        Let $s = |M| + 2(\ell + 1) + 1$; then $s \le 2\binom{m}{2}(L+1) + 2\ell + 3 \le 2 m^2 L$.
        Denote the bipartition of $G$ by $\{X, Y\}$ and suppose that $|X| \ge \frac{n}{2}$.
        Call a pair $(u,v)$, with $u, v \in X$, \emph{$s$-bad} if $\homwstar(C_{2k}^{uv}) > \frac{1}{s^2} \hom(C_{2k}^{uv})$. 
        Applying \Cref{lem:few-bad-pairs} with $p = 64$ and verifying the condition on $d$ (namely, that $d \ge 2^{38} k^3 \mu^7 s^4 p^2 (n)^{1/k}$), at most $\frac{n^2}{64}$ ordered pairs $(u, v)$, with $u, v \in X$, are $s$-bad. 
        
        We claim that there is a pair $(u, v)$, with $u, v \in U \cap X$, which is neither colour-bad nor $s$-bad. Indeed, the total number of ordered pairs $(u, v)$ with $u, v \in U \cap X$ where $(u, v)$ is either colour-bad or $s$-bad is at most $\frac{n^2}{128}+\frac{n^2}{64}\leq \frac{n^2}{32}$. Since $|U| \ge \frac{3n}{4}$ and $|X| \ge \frac{n}{2}$, we have $|X \cap U| \ge \frac{n}{4}$, so the number of ordered pairs $(u, v)$ with $u, v \in U \cap X$ is certainly more than $\frac{n^2}{32}$. Hence, there is a pair $(u, v)$ which is neither colour-bad nor $s$-bad, as claimed. 
        Since $(u,v)$ is not $s$-bad, by Lemma~\ref{lem:theta} there are $s$ many pairwise colour-disjoint and internally vertex-disjoint rainbow paths of length $k$ from $u$ to $v$. By the choice of $s$, there is at least one such path $T(uv)$ which shares no colours with the paths $P(u)$ and $Q(v)$ and also avoids $M$. Since $(u,v)$ is not colour-bad, $P(u)$ and $Q(v)$ are colour-disjoint. It follows that $P(u)T(uv)Q(v)$   is a rainbow $x,y$-walk avoiding $M$ which contains a rainbow $x,y$-path, as desired.
        \end{proof}
                
        Let $(x_1, y_1), \ldots, (x_{\binom{m}{2}}, y_{\binom{m}{2}})$ be an arbitrary ordering of the unordered pairs $(x, y)$ where $x, y \in Z$ and $x \neq y$.
        We iteratively build paths $P_i$ for $i \in [\binom{m}{2}]$, as follows. Let $P_1$ be any rainbow $x_1,y_1$-path of length at most $L$, which exists
        by Claim~\ref{claim:rainbow-connect}. In general, suppose $P_1,\dots, P_i$ have been defined, where $i<\binom{m}{2}$. We let $M_i$ denote the set of vertices  and colours used in $\cup_{j=1}^i P_j$ and let $P_{i+1}$ be a rainbow $x_{i+1},y_{i+1}$-path of length at most $L$ that avoids $M_i\setminus\{x_{i+1},y_{i+1}\}$.
        Since $|M_i|\leq 2\binom{m}{2}(L+1)$, by Claim~\ref{claim:rainbow-connect} such a path $P_{i+1}$ exists. Hence, we are able to find $P_1,\dots, P_{\binom{m}{2}}$
        as described above. Now, $\bigcup_{i=1}^{\binom{m}{2}} P_i$ forms a rainbow $K_m$-subdivision rooted at $Z$ in which each edge is subdivided
        at most $L$ times.
    \end{proof}

\section{Conclusion} \label{sec:conc} 
	In this paper we showed that there is a constant $c\leq 53$ such that for fixed $m$ and sufficiently large $n$ any  $n$-vertex properly edge-coloured graph $G$ 
	with at least $n(\log n)^c$ edges contains a rainbow subdivision of $K_m$. 
	On the other hand, an immediate lower bound is given by the best known lower bound from \cite{keevash2007rainbow} on $\exstar(n,\C)$, which is $\Omega(n\log{n})$. This shows that our bound is tight up to a polylogarithmic factor.
	We pose the following question.

	\begin{question} \label{question:correct-bound}
		Fix $m\geq 2$. What is the smallest $c$ such that for % some $q$ and 
		all sufficiently large $n$ the following holds: if $G$ is a properly edge-coloured graph on $n$ vertices with at least $\Omega(n(\log{n})^c)$ edges, then it contains a rainbow subdivision of $K_m$? 
		In particular, is $c=1$?
	\end{question}

	For the clarity of presentation, we did not optimise our arguments to obtain
	the best possible value of $c$. However,
	to answer Question~\ref{question:correct-bound}, new ideas will be needed. %For instance, it is possible that using a more refined expansion idea, our bound can be improved.
	Note that even the correct order of magnitude of $\exstar(n,\C)$ is still unknown.   
	On another note, as mentioned previously, Janzer~\cite{janzer2020rainbow} proved $\exstar(n,\C)=O(n(\log n)^4)$, using the counting lemmas on closed walks.   It is worth to mention that from our methods, an alternative proof of $\ex^{*}(n,\mathcal{C})=O(n(\log{n})^5)$  can be obtained by using the basic expansion property of an expander (Lemma~\ref{lem:reachable-robust-new}) and a digraph idea used by Letzter~\cite{letzter2021tight} regarding the Tur\'an number of the family of tight cycles. 

	\subsection*{Acknowledgements}
		We would like to thank the anonymous referee for useful comments, in particular for pointing out an error in the previous version of our proof which we have now rectified.
		Since the submission of the first version of our paper to arXiv, several interesting developments regarding rainbow Tur\'an numbers of cycles and of clique subdivisions occurred.
		First, Tomon \cite{tomon2022robust} showed that $\ex^*(n, \C) \le n (\log n)^{2 + o(1)}$, and that every $n$-vertex graph with at least $n (\log n)^{6 + o(1)}$ edges contains a rainbow subdivision of any constant sized clique, using a novel argument about sampling in expanders.
		The latter bound was subsequently improved to $n (\log n)^{2 + o(1)}$ by Wang \cite{wang}.
		The bound on the rainbow Tur\'an number of cycles was later improved to $\ex^*(n, \C) \le O(n (\log n)^2)$ by Janzer and Sudakov \cite{janzer2022tur} and independently by Kim, Lee, Liu, and Tran \cite{joonkyung}.
	%In forthcoming work by Alon, Buci\'c, Sauermann, Zakharov, and Zamir, the last bound is further improved to the almost tight bound of $O(n \log n \log \log n)$.

\bibliography{ref.bib}
\bibliographystyle{amsplain}

\end{document}